\documentclass[twoside,leqno,10pt, A4]{amsart}
\usepackage{amsfonts}
\usepackage{amsmath}
\usepackage{amscd}
\usepackage{amssymb}
\usepackage{amsthm}
\usepackage{amsrefs}
\usepackage{latexsym}
\usepackage{mathrsfs}
\usepackage{bbm}
\usepackage{enumerate}
\usepackage{graphicx}

\usepackage{amsfonts}
\usepackage{amsmath}
\usepackage{amscd}
\usepackage{amssymb}
\usepackage{amsthm}
\usepackage{amsrefs}
\usepackage{latexsym}
\usepackage{mathrsfs}
\usepackage{bbm}
\usepackage{amscd}
\usepackage{amssymb}
\usepackage{amsthm}
\usepackage{amsrefs}
\usepackage{latexsym}
\usepackage{mathrsfs}
\usepackage{bbm}
\usepackage{enumerate}
\usepackage{graphicx}
\usepackage{color}
\setlength{\textwidth}{18.2cm}
\setlength{\oddsidemargin}{-0.7cm}
\setlength{\evensidemargin}{-0.7cm}
\setlength{\topmargin}{-0.7cm}
\setlength{\headheight}{0cm}
\setlength{\headsep}{0.5cm}
\setlength{\topskip}{0cm}
\setlength{\textheight}{23.9cm}
\setlength{\footskip}{.5cm}

\begin{document}

\newtheorem{theorem}[subsection]{Theorem}
\newtheorem{proposition}[subsection]{Proposition}
\newtheorem{lemma}[subsection]{Lemma}
\newtheorem{corollary}[subsection]{Corollary}
\newtheorem{conjecture}[subsection]{Conjecture}
\newtheorem{prop}[subsection]{Proposition}
\numberwithin{equation}{section}
\newcommand{\mr}{\ensuremath{\mathbb R}}
\newcommand{\mc}{\ensuremath{\mathbb C}}
\newcommand{\dif}{\mathrm{d}}
\newcommand{\intz}{\mathbb{Z}}
\newcommand{\ratq}{\mathbb{Q}}
\newcommand{\natn}{\mathbb{N}}
\newcommand{\comc}{\mathbb{C}}
\newcommand{\rear}{\mathbb{R}}
\newcommand{\prip}{\mathbb{P}}
\newcommand{\uph}{\mathbb{H}}
\newcommand{\fief}{\mathbb{F}}
\newcommand{\majorarc}{\mathfrak{M}}
\newcommand{\minorarc}{\mathfrak{m}}
\newcommand{\sings}{\mathfrak{S}}
\newcommand{\fA}{\ensuremath{\mathfrak A}}
\newcommand{\mn}{\ensuremath{\mathbb N}}
\newcommand{\mq}{\ensuremath{\mathbb Q}}
\newcommand{\half}{\tfrac{1}{2}}
\newcommand{\f}{f\times \chi}
\newcommand{\summ}{\mathop{{\sum}^{\star}}}
\newcommand{\chiq}{\chi \bmod q}
\newcommand{\chidb}{\chi \bmod db}
\newcommand{\chid}{\chi \bmod d}
\newcommand{\sym}{\text{sym}^2}
\newcommand{\hhalf}{\tfrac{1}{2}}
\newcommand{\sumstar}{\sideset{}{^*}\sum}
\newcommand{\sumprime}{\sideset{}{'}\sum}
\newcommand{\sumprimeprime}{\sideset{}{''}\sum}
\newcommand{\sumflat}{\sideset{}{^\flat}\sum}
\newcommand{\shortmod}{\ensuremath{\negthickspace \negthickspace \negthickspace \pmod}}
\newcommand{\V}{V\left(\frac{nm}{q^2}\right)}
\newcommand{\sumi}{\mathop{{\sum}^{\dagger}}}
\newcommand{\mz}{\ensuremath{\mathbb Z}}
\newcommand{\leg}[2]{\left(\frac{#1}{#2}\right)}
\newcommand{\muK}{\mu_{\omega}}
\newcommand{\thalf}{\tfrac12}
\newcommand{\lp}{\left(}
\newcommand{\rp}{\right)}
\newcommand{\Lam}{\Lambda_{[i]}}
\newcommand{\lam}{\lambda}
\def\L{\fracwithdelims}
\def\om{\omega}
\def\pbar{\overline{\psi}}
\def\phi{\varphi}
\def\lam{\lambda}
\def\lbar{\overline{\lambda}}
\newcommand\Sum{\Cal S}
\def\Lam{\Lambda}
\newcommand{\sumtt}{\underset{(d,2)=1}{{\sum}^*}}
\newcommand{\sumt}{\underset{(d,2)=1}{\sum \nolimits^{*}} \widetilde w\left( \frac dX \right) }
\renewcommand{\a}{\alpha}

\theoremstyle{plain}
\newtheorem{conj}{Conjecture}
\newtheorem{remark}[subsection]{Remark}

\makeatletter
\def\widebreve{\mathpalette\wide@breve}
\def\wide@breve#1#2{\sbox\z@{$#1#2$}%
     \mathop{\vbox{\m@th\ialign{##\crcr
\kern0.08em\brevefill#1{0.8\wd\z@}\crcr\noalign{\nointerlineskip}%
                    $\hss#1#2\hss$\crcr}}}\limits}
\def\brevefill#1#2{$\m@th\sbox\tw@{$#1($}%
  \hss\resizebox{#2}{\wd\tw@}{\rotatebox[origin=c]{90}{\upshape(}}\hss$}
\makeatletter

\title[Lower bounds for moments of quadratic Dirichlet $L$-functions]{Lower bounds for moments of quadratic twists of modular $L$-functions}

%%\date{\today}
\author[P. Gao]{Peng Gao}
\address{School of Mathematical Sciences, Beihang University, Beijing 100191, China}
\email{penggao@buaa.edu.cn}

\author[L. Zhao]{Liangyi Zhao}
\address{School of Mathematics and Statistics, University of New South Wales, Sydney, NSW 2052, Australia}
\email{l.zhao@unsw.edu.au}

\begin{abstract}
 We establish sharp lower bounds for the $2k$-th moment in the range $k \geq 1/2$ of the family of quadratic twists of modular $L$-functions at the central point.
\end{abstract}

\maketitle

\noindent {\bf Mathematics Subject Classification (2010)}: 11M06, 11F67  \newline

\noindent {\bf Keywords}: moments, quadratic twists,  modular $L$-functions, lower bounds

\section{Introduction}
\label{sec 1}

 The Birch--Swinnerton-Dyer conjecture motivates a great amount of work enriching our understanding of the central values of various $L$-functions.  In \cite{MM}, M. R. Murty and  V. K. Murty obtained an asymptotic formula for the first moment of the derivative of quadratic twists of elliptic curve $L$-functions at the central point. Their result implies that there are infinitely many such $L$-functions with nonvanishing derivatives at that point of interest.  The error term in this asymptotic formula was later improved by H. Iwaniec \cite{Iwaniec90} in a smoothed version. A better error term had also been announced earlier by D. Bump, S. Friedberg, and J. Hoffstein in \cite{BFH89}. \newline

 The result mentioned in \cite{BFH89} concerns more generally with all modular newforms. Subsequent work on the first moment of these modular $L$-functions or their derivatives at the central point can be found in \cite{Stefanicki, LR97, Radziwill&Sound, Munshi09, Munshi,Munshi11, Petrow}.  Note that when the sign of the functional equation satisfied by an $L$-function equals $-1$, then the corresponding $L$-function has a vanishing central value.  Hence its derivative at the central point is a natural object to study. \newline

 In \cite{Young1,Young2}, M. P. Young developed a recursive method to improve the error terms in the first and third moment of quadratic Dirichlet $L$-functions. This method was adapted by Q. Shen \cite{Shen21} to ameliorate the error terms in the first moment of quadratic twists of modular $L$-functions. The results of Shen can be regarded as commensurate to a result of K. Sono \cite{Sono} on the second moment of quadratic Dirichlet $L$-functions. \newline

 In \cite{S&Y}, K. Soundararajan and M. P. Young obtained a formula for the second moment of quadratic twists of modular $L$-functions at the central point under the assumption of the generalized Riemann hypothesis (GRH).  I. Petrow \cite{Petrow} computed the second moment of the derivative of quadratic twists of modular $L$-functions under GRH.  In addition to the above mentioned work on moments of modular $L$-functions at the central point, there are now well-established conjectures for these moments due to J. P. Keating and N. C. Snaith \cite{Keating-Snaith02},  with subsequent contributions from J. B. Conrey, D. W. Farmer, J. P. Keating, M. O. Rubinstein and N. C. Snaith in \cite{CFKRS} as well as from A. Diaconu, D. Goldfeld and J. Hoffstein \cite{DGH}. \newline

  In this paper, we are interested in obtaining lower bounds for the moments of modular $L$-functions at the central point of the conjectured order of magnitude. To state our results, we need some notation first.  Let $f$ be a fixed  holomorphic Hecke eigenform of weight $\kappa $ for the full modular group $SL_2 (\mathbb{Z})$. The Fourier expansion of $f$ at infinity can be written as
\[ f(z) = \sum_{n=1}^{\infty} \lambda_f (n) n^{(\kappa -1)/2} e(nz), \quad \mbox{where} \quad e(z)=\exp (2 \pi i z). \]
Here the coefficients $\lambda_f (n)$ are real and satisfy $\lambda_f (1) =1$ and $0 \neq |\lambda_f(n)|
\leq d(n)$ for $n \geq 1$ with $d(n)$ being the number of divisors of $n$. We write $\chi_d=\leg {d}{\cdot} $ for the Kronecker
symbol.  For $\Re(s) > 1$, we define the twisted modular $L$-function $L(s, f \otimes \chi_d)$ as follows.
\begin{align*}
L(s, f \otimes \chi_d) &= \sum_{n=1}^{\infty} \frac{\lambda_f(n)\chi_d(n)}{n^s}
 = \prod_{p\nmid d} \left(1 - \frac{\lambda_f (p) \chi_d(p)}{p^s}  + \frac{1}{p^{2s}}\right)^{-1}.
\end{align*}

 Then $L(s, f \otimes \chi_d)$ has analytical continuation to the entire complex plane and satisfies the functional equation
\begin{align*}
%%\label{equ:FE}
\Lambda (s, f \otimes \chi_d) = \left(\frac{|d|}{2\pi} \right)^s \Gamma (s + \tfrac{\kappa -1}{2}) L(s, f \otimes \chi_d)
= i^\kappa \epsilon(d ) \Lambda (1- s, f \otimes \chi_d),
\end{align*}
where $\epsilon(d) =1$ if $d$ is positive, and  $\epsilon(d) =-1$ if $d$ is negative. \newline

We consider the family $\{ L(s, f \otimes \chi_{8d}) \}$ with $d$ positive, odd and square-free. Note that
the sign of the functional equation for each member in this family is positive if $\kappa \equiv 0 \pmod 4$, in which case one can study moments of these central $L$-values.  It is expected that for all positive real $k$, we have
\begin{align*}
%%\label{upperlower}
  \sumstar_{\substack{ 0<d<X \\ (d,2)=1}}|L(\tfrac{1}{2}, f \otimes \chi_{8d})|^k \sim C_kX(\log X)^{(k(k-1))/2},
\end{align*}
  where $\Sigma^*$ means that the sum runs over square-free integers and the $C_k$'s are explicitly computable constants. \newline

When $\kappa \equiv 2 \pmod 4$, the sign of the functional equation for each member of the above family is negative so that the corresponding central $L$-value is zero. In this case, one can study moments of the derivatives of these $L$-functions at $s=1/2$. The expectation in this case is,  for all positive real $k$,
\begin{align} \label{upperlower}
  \sumstar_{\substack{ 0<d<X \\ (d,2)=1}}|L'(\tfrac{1}{2}, f \otimes \chi_{8d})|^k \sim C'_kX(\log X)^{(k(k+1))/2},
\end{align}
with explicitly computable constants $C'_k$. \newline

  The aim of this paper is to establish the lower bounds, of the conjectured order of magnitudes, for the moments in \eqref{upperlower}. Our results are as follows.
\begin{theorem}
\label{thmlowerbound}
   With notations as above and let $k \geq 1$. For $\kappa \equiv 0 \pmod 4$ and $\kappa \neq 0$, we have
\begin{equation} \label{k=0mod4}
   \sumstar_{\substack{ 0<d<X \\ (d,2)=1}}|L(\tfrac{1}{2},f \otimes \chi_{8d})|^{k} \gg_k X(\log X)^{(k(k-1))/2}.
\end{equation}
   For $\kappa \equiv 2 \pmod 4$, we have
\begin{equation} \label{k=2mod4}
   \sumstar_{\substack{ 0<d<X \\ (d,2)=1}}|L'(\tfrac{1}{2},f \otimes \chi_{8d})|^{k} \gg_k X(\log X)^{(k(k+1))/2}.
\end{equation}
\end{theorem}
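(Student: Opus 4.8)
The plan is to adapt the lower-bounds machinery of Radziwiłł–Soundararajan (as developed for quadratic Dirichlet $L$-functions by Gao, and earlier by Heap–Soundararajan etc.) to this modular family. I will treat the case $\kappa\equiv 0\pmod 4$ first and indicate the modifications for the derivative case. Fix a small parameter, write $L=\log X$, and introduce a Dirichlet polynomial $\mathcal{N}(d)$ playing the role of a ``mollifier''/short Euler product: a product over dyadic ranges $P_j$ of blocks $\Big(\sum_{p\in I_j}\tfrac{\lambda_f(p)\chi_{8d}(p)}{\sqrt p}\Big)^{\ell_j}$, with the length of the $j$th block and the exponents $\ell_j$ chosen in the standard ``pretentious'' way so that $\mathcal{N}(d)$ behaves like a truncated approximation to $L(\tfrac12,f\otimes\chi_{8d})^{k/2}$ but has all its mass on a Dirichlet polynomial supported on integers up to $X^{\theta}$ for some small $\theta$. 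Because $k\ge 1$ one uses Hölder's inequality in the form
\begin{equation*}
\sumstar_{0<d<X}|L(\tfrac12,f\otimes\chi_{8d})|^{k}
\ge \frac{\Big|\sumstar_{0<d<X}L(\tfrac12,f\otimes\chi_{8d})\,\mathcal{N}(d)\,\overline{\mathcal{N}(d)}^{\,(k-1)/ \ldots}\Big|^{?}}{\Big(\sumstar \cdots\Big)^{?}},
\end{equation*}
more precisely the three-quantity Hölder scheme: bound from below a first moment $S_1=\sumstar L(\tfrac12,f\otimes\chi_{8d})\mathcal{N}(d)^{?}$, bound from above a ``$2k/(2k-1)$'' type moment $S_2=\sumstar|\mathcal{N}(d)|^{2k/(2k-1)}$, and combine via $S_1\le (\sumstar|L|^k)^{1/k}S_2^{(k-1)/k}$, so that $\sumstar|L|^k\gg S_1^k/S_2^{k-1}$.

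The three technical inputs I would establish as lemmas are: (i) an \emph{asymptotic (or sharp lower bound) for the twisted first moment} $\sumstar_{0<d<X,(d,2)=1} L(\tfrac12,f\otimes\chi_{8d})\,a(d)$ for $a(d)$ a Dirichlet polynomial of length a small power of $X$ — this is where the modular structure enters, via an approximate functional equation for $L(\tfrac12,f\otimes\chi_{8d})$ (using $i^\kappa\epsilon(8d)=+1$), Poisson summation in $d$ over the square-free integers $8d$, extraction of the diagonal $n=\square$ term, and control of the off-diagonal using bounds for character sums against $\lambda_f$; the length restriction keeps the dual sum short enough that only the main term survives. (ii) \emph{upper bounds for the shifted/mollified moments} $\sumstar|\mathcal{N}(d)|^{2}$ and more generally $\sumstar L(\tfrac12,f\otimes\chi_{8d})\mathcal{N}(d)^{2}$ and $\sumstar|\mathcal{N}(d)|^{2k/(2k-1)}$, again via Poisson/large-sieve for quadratic characters; here one exploits that $\mathcal{N}$ is a product of short blocks so its moments factor over primes and match the conjectured $(\log X)^{k(k-1)/2}$ power. (iii) A \emph{lower bound $|\mathcal{N}(d)|\gg$ (something)} valid off an exceptional set, i.e.\ the standard statement that on a density-$\to 1$ set of $d$ one has $\log|L(\tfrac12,f\otimes\chi_{8d})|$ close to $\sum_{p\le X^{1/\log\log X}}\tfrac{\lambda_f(p)\chi_{8d}(p)}{\sqrt p}$, so that truncating the Euler product is legitimate.

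For $\kappa\equiv 2\pmod 4$ the central value vanishes and one replaces $L(\tfrac12,f\otimes\chi_{8d})$ by $L'(\tfrac12,f\otimes\chi_{8d})$ throughout; the only structural change is that the relevant arithmetic factor is $(\log X)^{k(k+1)/2}$ rather than $(\log X)^{k(k-1)/2}$, which manifests itself in the choice of the exponents in $\mathcal{N}(d)$ — one uses a Dirichlet polynomial modelling $L'(\tfrac12)^{k/2}$, whose coefficients carry an extra $\log$ from differentiating the approximate functional equation — and correspondingly in a twisted first moment for $L'(\tfrac12,f\otimes\chi_{8d})a(d)$, which one obtains by differentiating the approximate functional equation for the completed $\Lambda$-function before applying Poisson. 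I expect the main obstacle to be input (i), the twisted first moment with a Dirichlet polynomial of a genuine (if small) power-of-$X$ length: one must push the off-diagonal analysis far enough — controlling sums of the shape $\sum_{d}\chi_{8d}(n)$ over square-free $d$ weighted against $\lambda_f(n)$, for $n$ up to $X^{\theta}$ — to guarantee the diagonal dominates, and in the derivative case the extra $\log$-weights and the differentiated gamma-factor make the contour shifts and error bookkeeping noticeably heavier; everything else (Hölder, the factorization of the mollifier moments, the exceptional-set estimate) is by now routine once (i) is in hand.
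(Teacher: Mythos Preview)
Your overall scheme---H\"older with exponents $k$ and $k/(k-1)$, a twisted first moment of $L(\tfrac12,f\otimes\chi_{8d})$ (or $L'$) against a short Dirichlet polynomial, and an upper bound for the $\tfrac{2k}{2k-1}$-moment of the mollifier---is exactly what the paper does, and the key arithmetic input you identify, a twisted first moment with polynomial of length a small power of $X$, is indeed supplied by Shen's work and is the crux.

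Two corrections are worth noting. First, your item (iii), a pointwise lower bound for $|\mathcal N(d)|$ off an exceptional set, is not needed and is not how the paper proceeds: the mollifier is not a product of bare powers $\big(\sum_{p\in I_j}\lambda_f(p)\chi_{8d}(p)/\sqrt p\big)^{\ell_j}$ but a product of \emph{truncated exponentials} $E_{\ell_j}\big((2k-1)\mathcal P_j(d)\big)=\sum_{r\le\ell_j}((2k-1)\mathcal P_j(d))^r/r!$ with $\ell_j$ even, which are automatically positive for real argument; this positivity is what lets H\"older be applied directly to $\sum L'\cdot\mathcal N$ without any exceptional-set bookkeeping. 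Second, in the derivative case the mollifier does \emph{not} change---you still use $\mathcal N(d,2k-1)$ modelling a power of $L$, not of $L'$. The extra factor of $\log X$ that shifts the exponent from $k(k-1)/2$ to $k(k+1)/2$ comes not from the mollifier but from the twisted first moment formula itself: when $\kappa\equiv 2\pmod 4$ one differentiates Shen's asymptotic in the shift parameter and picks up a main term of size $X\log X$ rather than $X$.
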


   Our proof of the above theorem is mainly based on the lower bounds principle of Heap and Soundararajan in \cite{H&Sound}.  As this method requires the evaluation of the twisted moments of modular functions, we shall also make crucial use of the results of Q. Shen \cite{Shen21} on the twisted first moment of twisted modular functions.

\section{Plan of the Proof}
\label{sec 2'}

As the proofs of \eqref{k=0mod4} and \eqref{k=2mod4} are similar, we shall only consider the case $\kappa \equiv 2 \pmod 4$ in Theorem \ref{thmlowerbound} throughout the paper.  We  shall further replace $k$ by $2k$ and assume that $k > 1/2$ since the case $k=1/2$ follows from \cite[Theorems 1.1--1.2]{Shen21}.  Let $\Phi$ be a smooth, non-negative function compactly supported on $[1/8,7/8]$ such that
$\Phi(x) \leq 1$ for all $x$ and $\Phi(x) =1$ for $x\in [1/4,3/4]$. Assuming that $X$ is a large positive real number, we find that in order to prove \eqref{k=2mod4}, it suffices to show that
\begin{equation} \label{lowerbound}
   \sumstar_{\substack{(d,2)=1}}|L'(\tfrac{1}{2},f \otimes \chi_{8d})|^{2k}\Phi \left( \frac dX \right) \gg_k X(\log X)^{\frac{2k(2k+1)}{2}}.
\end{equation}
  To this end, we recall the lower bounds principle of Heap and Soundararajan in \cite{H&Sound} for our situation. Let $\{ \ell_j \}_{1 \leq j \leq R}$ be a sequence of even
  natural numbers such that $\ell_1= 2\lceil N \log \log X\rceil$ and $\ell_{j+1} = 2 \lceil N \log \ell_j \rceil$ for $j \geq 1$, where $N, M$ are two large natural numbers depending on $k$ only and $R$
  is the largest natural number satisfying $\ell_R >10^M$. We shall choose $M$ large enough to ensure $\ell_{j} >\ell_{j+1}^2$ for all $1 \leq j \leq R-1$. It follows that
\begin{align}
\label{sumoverell}
  R \ll \log \log \ell_1 \quad \mbox{and} \quad \sum^R_{j=1}\frac 1{\ell_j} \leq \frac 2{\ell_R}.
\end{align}

    Let ${P}_1$ be the set of odd primes not exceeding $X^{1/\ell_1^2}$ and
${P_j}$ be the set of primes in the interval $(X^{1/\ell_{j-1}^2}, X^{1/\ell_j^2}]$ for $2\le j\le R$, we define
\begin{equation*}
%% \label{defP}
{\mathcal P}_j(d) = \sum_{p\in P_j} \frac{\lambda_f(p)}{\sqrt{p}} \chi_{8d}(p),
\end{equation*}
and for any $\alpha \in \mr$,
\begin{align}
\label{defN}
{\mathcal N}_j(d, \alpha) = E_{\ell_j} (\alpha {\mathcal P}_j(d)) \quad \mbox{and} \quad \mathcal{N}(d, \alpha) = \prod_{j=1}^{R} {\mathcal N}_j(d,\alpha),
\end{align}
  where for any integer $\ell \geq 0$ and any $x \in \mr$,
\begin{equation*}
%%\label{E_ell}
E_{\ell}(x) = \sum_{j=0}^{\ell} \frac{x^{j}}{j!}.
\end{equation*}

  Since $\lambda_f(p)$ is real, so is ${\mathcal P}_j(d)$. Consequently, it follows from \cite[Lemma 1]{Radziwill&Sound} that the
  quantities defined in \eqref{defN} are all positive. As $2k>1$, H\"older's inequality gives
\begin{align}
\label{basicbound1}
\begin{split}
 \sumstar_{(d,2)=1} L' &(\half, f \otimes \chi_{8d}) \mathcal{N}(d, 2k-1)   \Phi\Big( \frac dX \Big) \\
 \leq & \Big ( \sumstar_{(d,2)=1}|L'(\half, f \otimes \chi_{8d})|^{2k} \Phi \Big( \frac dX \Big)\Big )^{1/(2k)}\Big ( \sumstar_{(d,2)=1}  \mathcal{N}(d, 2k-1)^{2k/(2k-1)}
 \Phi \Big( \frac dX \Big) \Big)^{(2k-1)/(2k)}.
\end{split}
\end{align}

We deduce from \eqref{basicbound1} that in order to establish \eqref{lowerbound} and hence \eqref{k=2mod4}, it suffices to establish the following propositions.
\begin{proposition}
\label{Prop4} With notations as above, we have, for $\kappa \equiv 2 \pmod 4$ and $k > 1/2$,
\begin{align}
\label{LprimeN}
\sumstar_{(d,2)=1} L'(\half, f \otimes \chi_{8d}){\mathcal N}(d, 2k-1)\Phi\Big(\frac{d}{X}\Big) \gg X (\log X)^{((2k)^2+1)/2}.
\end{align}
\end{proposition}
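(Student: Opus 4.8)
The plan is to expand $\mathcal{N}(d,2k-1)$ into a short Dirichlet polynomial, evaluate the sum in \eqref{LprimeN} term by term using the twisted first moment of $L'(\half,f\otimes\chi_{8d})$ due to Q.\ Shen, and then identify the resulting main term as an Euler product of the expected order of magnitude. Writing $E_{\ell_j}((2k-1)\mathcal{P}_j(d))$ as a polynomial of degree at most $\ell_j$ in $\mathcal{P}_j(d)$, each $\mathcal{N}_j(d,2k-1)$ becomes a Dirichlet polynomial $\sum_{n_j}c_j(n_j)\chi_{8d}(n_j)n_j^{-1/2}$ supported on $P_j$-smooth $n_j$ with $\Omega(n_j)\le\ell_j$ and $n_j\le X^{1/\ell_j}$, where $c_j(p^a)=(2k-1)^a\lambda_f(p)^a/a!$ at prime powers. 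Multiplying over $1\le j\le R$ gives $\mathcal{N}(d,2k-1)=\sum_n c(n)\chi_{8d}(n)n^{-1/2}$ with $c$ supported on $n\le X^{\sum_j 1/\ell_j}\le X^{2/\ell_R}$ by \eqref{sumoverell}, so the left-hand side of \eqref{LprimeN} equals $\sum_n c(n)n^{-1/2}\sumstar_{(d,2)=1}L'(\half,f\otimes\chi_{8d})\chi_{8d}(n)\Phi(d/X)$, into which I substitute Shen's asymptotic.

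Consider first the error terms. Shen's formula expresses the inner sum as $X$ times a main term of size $\asymp (\log X)\,\lambda_f(n_1)g(n_1)n_1^{-1/2}$, where $n_1$ is the product of the primes dividing $n$ to an odd power and $g$ is a fixed multiplicative function with $g(p)=1+O(1/p)$, plus an error that saves a power of $X$ over the main term and grows at most polynomially in $n$. Using the degree restrictions (which allow $E_{\ell_j}$ to be bounded even when its argument exceeds $\ell_j$) one checks $\sum_n|c(n)|n^{-1/2}\ll X^{1/\ell_R+o(1)}$; since the Dirichlet polynomial also has length $X^{O(1/\ell_R)}$, a sufficiently large choice of $M$ (hence of $\ell_R$) renders the total error $\ll X^{1-\delta}$ for some $\delta>0$, which is negligible against the claimed bound.

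Now the main term. On the support of $c$ one has $n\le X^{2/\ell_R}$, hence $\log(X/n)=(1+O(\ell_R^{-1}))\log X$ there, and substituting Shen's main term shows the left-hand side of \eqref{LprimeN} is $\gg_k X\log X\prod_{j=1}^{R}\mathcal{Q}_j$, where $\mathcal{Q}_j$ is the sum over $P_j$-smooth $n_j$ with $\Omega(n_j)\le\ell_j$ of $c_j(n_j)\lambda_f((n_j)_1)g((n_j)_1)\bigl(n_j(n_j)_1\bigr)^{-1/2}$. Dropping for the moment the constraint $\Omega(n_j)\le\ell_j$, the sum $\mathcal{Q}_j$ factors as the Euler product $\prod_{p\in P_j}\bigl(\cosh u_p+\tfrac{\lambda_f(p)g(p)}{\sqrt p}\sinh u_p\bigr)$ with $u_p=(2k-1)\lambda_f(p)/\sqrt p$; here the hypothesis $k>1/2$ is used to make each local factor positive, since $\lambda_f(p)\sinh u_p\ge 0$, and Taylor expansion gives the local factor as $1+\tfrac{(2k-1)(2k+1)}{2}\cdot\tfrac{\lambda_f(p)^2}{p}+O(p^{-3/2})$. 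Taking the product over $1\le j\le R$, which ranges over all odd primes $p\le X^{1/\ell_R^2}$, and invoking the Rankin--Selberg estimate $\sum_{p\le y}\lambda_f(p)^2/p=\log\log y+O(1)$ together with $\log\log X^{1/\ell_R^2}=\log\log X+O_k(1)$ (valid since $\ell_R$ is bounded in terms of $k$), this product is $\gg_k(\log X)^{(4k^2-1)/2}$. Therefore the whole quantity is $\gg_k X(\log X)^{1+(4k^2-1)/2}=X(\log X)^{((2k)^2+1)/2}$, which is \eqref{LprimeN}.

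The main obstacle is to restore the truncations $\Omega(n_j)\le\ell_j$, i.e.\ to show that doing so alters each $\mathcal{Q}_j$ only by a factor $1+o(1)$, with these factors multiplying to $1+o(1)$ over the $R$ intervals; this is precisely the estimate underlying the Heap--Soundararajan lower-bound principle. The part of $\mathcal{Q}_j$ coming from $n_j$ with $\Omega(n_j)>\ell_j$ is bounded, via $\Omega(n_j)!\ge(\Omega(n_j)/e)^{\Omega(n_j)}$ and Rankin's trick, by $\bigl(C_k\ell_j^{-1}\sum_{p\in P_j}\lambda_f(p)^2/p\bigr)^{\ell_j}$ up to harmless divisor-function factors; since $\sum_{p\in P_j}\lambda_f(p)^2/p\ll\log(\ell_{j-1}/\ell_j)\le\ell_j/N$ by the recursion $\ell_j\asymp N\log\ell_{j-1}$ and the condition $\ell_{j-1}>\ell_j^2$, this tail is at most $(C_k/N)^{\ell_j}$, which for $N$ large is $o(1)$ times the (positive) full $P_j$-Euler product, while $\sum_j(C_k/N)^{\ell_j}<\tfrac12$ once $N$ and $M$ are large, as $\ell_R>10^M$ and $\ell_{j-1}>\ell_j^2$ force the $\ell_j$ to grow at least geometrically in $\log\ell_j$. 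I expect this truncation analysis --- together with the routine bookkeeping of the non-squarefree $n$, of the Hecke relations entering the arithmetic factors in Shen's main term (the finitely many primes with $\lambda_f(p)=0$ contributing only the trivial local factor $1$), and of the positivity of the small-prime Euler factors --- to be the bulk of the work; the remainder is standard.
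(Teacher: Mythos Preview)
Your approach is correct and follows the same overall strategy as the paper: expand $\mathcal N(d,2k-1)$ as a short Dirichlet polynomial, insert Shen's twisted first moment (Lemma~\ref{Prop1}), discard the $O$-term by shortness of the polynomial, recognise the main term as an Euler product with local factor $1+\tfrac{(2k-1)(2k+1)}{2}\lambda_f(p)^2/p+O(p^{-3/2})$, and control the truncations $\Omega(n_j)\le\ell_j$ by Rankin's trick together with \eqref{boundsforsumoverp}.

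The one substantive difference is your treatment of the factor $\log(X/l_1)$ in \eqref{eq:1stmomentder}. The paper splits the main term as $S_1-S_2$, with $S_1$ carrying $\log X$ and $S_2$ carrying $\log\bigl(\prod_i (n_i)_1\bigr)$; it then bounds $S_2$ from above by expanding the logarithm over primes $q\in\bigcup_j P_j$, estimating each $q$-contribution separately, and summing via \eqref{logp} to obtain $S_2\ll (\log X)/10^{2M}$ times the Euler product, so that $S_1-S_2\gg S_1$ for $M$ large. Your shortcut---replacing $\log(X/n_1)$ by $(1+O(\ell_R^{-1}))\log X$ pointwise on the support of $c$---avoids this $S_2$ analysis entirely and is legitimate, \emph{but only because every summand $c(n)\,\lambda_f(n_1)/\bigl(g(n)\sqrt{n\,n_1}\bigr)$ is non-negative}: at a prime power $p^a$ the local contribution is $(2k-1)^a\lambda_f(p)^{a+[a\text{ odd}]}/\bigl(a!\,p^{[a\text{ odd}]/2}\bigr)\ge 0$. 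You use this implicitly but never state it (you only record positivity of the full Euler factors after dropping the truncation); make it explicit, since without it a pointwise $1+O(\ell_R^{-1})$ cannot be pulled through a signed sum. Likewise, you should explicitly dispose of the secondary terms $C_2$ and $\sum_{p\mid l}C_2(p)(\log p)/p$ in \eqref{eq:1stmomentder}; by the same positivity they contribute $O(1)$ and $O((\log X)/\ell_R)$ times the Euler sum respectively, hence are negligible. A minor inaccuracy: the arithmetic weight in \eqref{eq:1stmomentder} is $1/g(l)$ with $l$ the full integer, not $g(n_1)$.
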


\begin{proposition}
\label{Prop5} With notations as above, we have, for $\kappa \equiv 2 \pmod 4$ and $k > 1/2$,
\begin{align*}
%%\label{Nestmation}
\sumstar_{(d,2)=1}\mathcal{N}(d, 2k-1)^{2k/(2k-1)} \Phi\Big(\frac{d}{X}\Big) \ll X ( \log X  )^{(2k)^2/2}.
\end{align*}
\end{proposition}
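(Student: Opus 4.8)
\emph{Proof plan.} The strategy is to dominate the non-integral power $\mathcal N(d,2k-1)^{2k/(2k-1)}$ by a short Dirichlet polynomial. Writing $a=2k/(2k-1)\geq 1$, I would introduce the companion polynomial $\mathcal N(d,2k)=\prod_{j=1}^{R}E_{\ell_j}(2k\,\mathcal P_j(d))$, positive by \cite[Lemma 1]{Radziwill&Sound}, and prove the pointwise bound
\[
  \mathcal N(d,2k-1)^{a}\ \ll\ \mathcal N(d,2k)\ +\ \sum_{\emptyset\neq T\subseteq\{1,\dots,R\}}\ \prod_{j\in T}\mathcal R_j(d)\prod_{j\notin T}E_{\ell_j}\bigl(2k\,\mathcal P_j(d)\bigr),
\]
where $\mathcal R_j(d)$ is a non-negative quantity that vanishes unless $|\mathcal P_j(d)|>\ell_j/(20k)$ and is then bounded by $(20k|\mathcal P_j(d)|/\ell_j)^{\ell_j}$ times a polynomial in $\mathcal P_j(d)$ of degree $\ll_k\ell_j$. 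Blockwise, on the range $|\mathcal P_j(d)|\leq\ell_j/(20k)$ this rests on the elementary identity $E_{\ell_j}(\alpha\mathcal P_j(d))=\exp(\alpha\mathcal P_j(d))(1+O(c_0^{-\ell_j}))$ for $\alpha\in\{2k-1,2k\}$ with some $c_0>1$ depending only on $k$; on the complementary range one uses $x^{a}\leq\max(x,x^{\lceil a\rceil})$ for $x\geq 0$ together with $|E_{\ell_j}(z)|\leq E_{\ell_j}(|z|)$. The factors $1+O(c_0^{-\ell_j})$ multiply to $O(1)$ by \eqref{sumoverell}. Because $\ell_j$ is even, each product $\prod_{j\in T}\mathcal R_j(d)\prod_{j\notin T}E_{\ell_j}(2k\mathcal P_j(d))$ is then bounded by a Dirichlet polynomial in $\chi_{8d}$ of length $\leq X^{O_k(1)/\ell_R}$ whose coefficients on the diagonal $n=\square$ are non-negative, and it suffices to bound $\sumstar_{(d,2)=1}\mathcal N(d,2k)\Phi(d/X)$ and each of these error sums by $\ll_k X(\log X)^{(2k)^2/2}$, the errors in fact by $o$ of it.

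For the main term I would expand $\mathcal N(d,2k)=\sum_n y(n)n^{-1/2}\chi_{8d}(n)$, with $y$ supported on $n$ composed of primes from $\bigcup_j P_j\subseteq\{p\leq X^{1/\ell_R^2}\}$ with $\Omega$ restricted in each block, so $n\leq X^{\sum_j1/\ell_j}\leq X^{2/\ell_R}$ is a tiny power of $X$, with $y(n)\geq 0$ for $n$ a square and $y(n)\ll(4k)^{\Omega(n)}$ in general. Inserting the standard formula for $\sumstar_{(d,2)=1}\chi_{8d}(n)\Phi(d/X)$ over odd squarefree $d$ — main term a constant times $X\,\mathbf 1(n=\square)$ times a bounded Euler factor, with a negligible error since $n$ is so short — would reduce the bound to $X$ times an Euler product over $p\leq X^{1/\ell_R^2}$ with local factors $1+\tfrac{(2k)^2\lambda_f(p)^2}{2p}+O_k(p^{-2})$ (the $\Omega$-truncations costing only $1+o(1)$ once $N$ is large). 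Since $\sum_{p\leq y}\lambda_f(p)^2/p=\log\log y+O(1)$ by Rankin--Selberg, equivalently the analyticity and non-vanishing of $L(s,\mathrm{sym}^2 f)$ at $s=1$, and $2\log\ell_R\ll_k1$, this product is $\ll_k(\log X)^{(2k)^2/2}$, giving the required bound for the main term.

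The error terms are where the real work lies. For a fixed non-empty $T$, the corresponding summand is a short Dirichlet polynomial, so its average over $d$ is governed by its diagonal, which — the supports $P_j$ being disjoint — factors as $\prod_{j\in T}(\text{diagonal of }\mathcal R_j)\cdot\prod_{j\notin T}(\text{diagonal of }E_{\ell_j}(2k\mathcal P_j))$; the latter product is $\ll(\log X)^{(2k)^2/2}$ as above, each factor being $\geq 1$. The quantitative input I would need is the moment estimate
\[
  \sumstar_{(d,2)=1}\mathcal P_j(d)^{2r}\,\Phi(d/X)\ \ll\ X\,(C r V_j)^{r},\qquad V_j=\sum_{p\in P_j}\frac{\lambda_f(p)^2}{p}\ \asymp\ \frac{\ell_j}{N},
\]
valid for all $r\leq\ell_j^2/4$ — precisely the range in which $\mathcal P_j(d)^{2r}$ is a Dirichlet polynomial of length $\leq X^{1/2}$, which is why block $j$ only uses primes below $X^{1/\ell_j^2}$. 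Using this in the diagonal of $\mathcal R_j$, whose weight contains $(20k|\mathcal P_j(d)|/\ell_j)^{\ell_j}$, the cancellation $\ell_j^{-\ell_j}\cdot(C\ell_jV_j)^{\ell_j/2}\asymp\ell_j^{-\ell_j}(\ell_j^2/N)^{\ell_j/2}=N^{-\ell_j/2}$ would yield a bound $\ll\delta^{\ell_j}$ with some $\delta=\delta(k,N)<1$, once $N$ is large in terms of $k$. Hence each $T$-term is $\ll X\,\delta^{\sum_{j\in T}\ell_j}(\log X)^{(2k)^2/2}$, and summing over all non-empty $T$ costs only $\prod_j(1+\delta^{\ell_j})-1\ll\delta^{\ell_R}$, because the design condition $\ell_{j-1}>\ell_j^{2}$ forces $\ell_j>\ell_R^{\,2^{R-j}}$, so $\delta^{\ell_j}$ is doubly exponentially small in $R-j$ and the nominal $2^{R}$ from the subset sum collapses. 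As $\ell_R>10^{M}$ with $M$ at our disposal, $\delta^{\ell_R}\leq\delta^{10^{M}}$ is as small as we please, so the error contribution is $o(X(\log X)^{(2k)^2/2})$. The delicate steps — the rest being routine — will be the reduction of the non-integral power $\mathcal N_j(d,2k-1)^{a}$ on the large-block range to a genuine Dirichlet polynomial $\mathcal R_j$ with non-negative diagonal coefficients, and establishing the moment estimate with the uniformity in $r$ stated above.
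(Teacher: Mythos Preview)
Your proposal is correct and follows essentially the same route as the paper: the paper's Lemma~\ref{lem1} supplies the blockwise pointwise bound with $\mathcal Q_j(d)=(124k^2\mathcal P_j(d)/\ell_j)^{2r_k\ell_j}$, $r_k=1+\lceil k/(2k-1)\rceil$, playing the role of your $\mathcal R_j$ (defined from the outset as a high even power of $\mathcal P_j$, so it is already a short Dirichlet polynomial with non-negative diagonal rather than something that literally ``vanishes'' on the small range), and then averages $\prod_j(\mathcal N_j(d,2k)+\mathcal Q_j(d))$ as a single short polynomial via Lemma~\ref{PropDirpoly}, factorizes the diagonal over the disjoint blocks $P_j$, and bounds each factor by the same Rankin--Selberg and moment estimates you describe. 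Your expansion over subsets $T$ is exactly the re-expansion of that factorized diagonal, so the two arguments coincide.
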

The remainder of the paper is thus devoted to the proofs of these propositions.

\section{Preliminaries}
\label{sec 2}

   We reserve the letter $p$ for a prime number and cite the following result on sum over primes.
\begin{lemma}
\label{RS} Let $x \geq 2$. We have, for some constant $b$,
\begin{align} \label{lam2p}
\sum_{p\le x} \frac{\lambda^2_f(p)}{p} =& \log \log x + b+ O\Big(\frac{1}{\log x}\Big).
\end{align}
Moreover, we have
 \begin{equation} \label{logp}
\sum_{p\le x} \frac {\log p}{p} = \log x + O(1).
\end{equation}
\end{lemma}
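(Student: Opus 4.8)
The plan is to treat the two estimates separately; the second, \eqref{logp}, is the classical theorem of Mertens and needs only a brief indication. Starting from the identity $\log\lfloor x\rfloor! = \sum_{n\le x}\Lambda(n)\lfloor x/n\rfloor$, together with Stirling's formula and Chebyshev's bound $\sum_{n\le x}\Lambda(n)\ll x$, one obtains $\sum_{n\le x}\Lambda(n)/n = \log x + O(1)$; since the higher prime powers contribute $\sum_{p}\log p\sum_{m\ge 2}p^{-m} = \sum_p\frac{\log p}{p(p-1)}<\infty$, replacing $\Lambda(n)/n$ by $(\log p)/p$ costs only $O(1)$, and \eqref{logp} follows.

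For \eqref{lam2p} the one genuinely analytic ingredient is the Rankin--Selberg square of $f$. I would use the factorization $L(s,f\times f) = \zeta(s)L(s,\mathrm{sym}^2 f)$, where, by Shimura and Gelbart--Jacquet, $L(s,\mathrm{sym}^2 f)$ is entire, has a degree-three Euler product and a functional equation, and --- the decisive point --- does not vanish on the line $\Re(s)=1$. Consequently $L(s,f\times f)$ is holomorphic on $\Re(s)\ge 1$ apart from a simple pole at $s=1$ at which $-\frac{L'}{L}(s,f\times f)$ has residue $1$, and it is non-zero on $\Re(s)=1$. Writing $\alpha_p,\beta_p$ for the Satake parameters of $f$ (so $\alpha_p\beta_p=1$ and $|\alpha_p|=|\beta_p|=1$ by Deligne), the relation $\alpha_p^2+\beta_p^2=\lambda_f(p)^2-2$ gives $\Lambda_{f\times f}(p)=\lambda_f(p)^2\log p$ and $|\Lambda_{f\times f}(p^m)|\le 4\log p$ for $m\ge 2$. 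The classical zero-free region for $\zeta$ and for $L(s,\mathrm{sym}^2 f)$ then furnishes the prime number theorem for $L(s,f\times f)$, and after discarding the negligible prime-power terms,
\[
\sum_{p\le x}\lambda_f(p)^2\log p = x + O\!\big(x\exp(-c\sqrt{\log x})\big)
\]
for some absolute $c>0$.

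It remains to pass from this to \eqref{lam2p} by partial summation: writing the sum in \eqref{lam2p} as $\int_{2^-}^{x}(t\log t)^{-1}\,d\theta(t)$ with $\theta(t)=\sum_{p\le t}\lambda_f(p)^2\log p = t+E(t)$, the boundary term gives $\tfrac{1}{\log x}+O(e^{-c'\sqrt{\log x}})$, the main term $t$ gives $\int_2^x\frac{dt}{t\log t}+\int_2^x\frac{dt}{t\log^2 t} = \log\log x - \tfrac{1}{\log x}+\text{const}$ (so the two $1/\log x$ contributions cancel), and $E(t)\ll t\exp(-c\sqrt{\log t})$ yields an absolutely convergent integral whose tail past $x$ is $O(e^{-c'\sqrt{\log x}})=O(1/\log x)$; collecting the constants defines $b$. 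A slightly softer alternative avoids $f\times f$ altogether: use $\lambda_f(p)^2=1+\lambda_f(p^2)$, evaluate $\sum_{p\le x}1/p$ by Mertens, and handle $\sum_{p\le x}\lambda_f(p^2)/p$ through the holomorphy and non-vanishing of $L(s,\mathrm{sym}^2 f)$ at $s=1$. The only real obstacle is thus the analytic continuation and the non-vanishing on $\Re(s)=1$ of the symmetric square $L$-function; granting these standard facts, everything else is routine. I would note that it is precisely the requested error $O(1/\log x)$, rather than $o(1)$, that forces an effective prime number theorem (a zero-free region) in place of Ikehara's Tauberian theorem, but for the degree-$\le 4$ $L$-functions at hand a de la Vallée Poussin--type zero-free region is classical and amply sufficient.
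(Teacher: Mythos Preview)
Your proposal is correct and follows essentially the same approach as the paper: the paper's proof is a two-line citation, deriving \eqref{lam2p} from the Rankin--Selberg theory for $L(s,f)$ (referring to Iwaniec--Kowalski, Chapter~5) and quoting \eqref{logp} as Mertens' theorem from Montgomery--Vaughan. Your argument simply fleshes out exactly these ingredients---the factorization $L(s,f\times f)=\zeta(s)L(s,\mathrm{sym}^2 f)$, the zero-free region, and partial summation---so there is no substantive difference.
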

\begin{proof} The assertion in \eqref{lam2p} follows from the Rankin-Selberg theory for $L(s, f)$,  which can be found in  \cite[Chapter 5]{iwakow}. The formula in \eqref{logp} is given in \cite[Lemma 2.7]{MVa1}. \end{proof}

Set $\delta_{n=\square}$ to be $1$ if $n$ is a square and $0$ otherwise.  We note the following result that can be established in a manner similar to \cite[Proposition 1]{Radziwill&Sound}.
\begin{lemma} \label{PropDirpoly}  For large $X$ and any odd positive integer $n$, we have
\begin{align}
\label{meancharsum}
\sumstar_{\substack{(d,2)=1}} \chi_{8d}(n) \Phi\Big(\frac{d}{X}\Big)=
\displaystyle \delta_{n=\square}{\widehat \Phi}(1) \frac{2X}{3\zeta(2)} \prod_{p|n} \Big(\frac p{p+1}\Big) + O(X^{1/2+\varepsilon} \sqrt{n}
).
\end{align}
\end{lemma}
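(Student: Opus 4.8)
The statement to prove is Lemma~\ref{PropDirpoly}, a Poisson-summation-type asymptotic for the character sum $\sumstar_{(d,2)=1}\chi_{8d}(n)\Phi(d/X)$.

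\medskip

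\textbf{Proof proposal.} The plan is to follow the standard approach of detecting the squarefree condition by M\"obius inversion, then applying Poisson summation in residue classes modulo $8n$ to the resulting smooth sum over $d$. First I would write the indicator of squarefree $d$ as $\sum_{a^2 \mid d}\mu(a)$, so that, after the substitution $d = a^2 e$ and restricting to $(a,2)=1$ (since $d$ is odd), one has
\begin{align*}
\sumstar_{(d,2)=1} \chi_{8d}(n)\Phi\Big(\frac dX\Big) = \sum_{\substack{(a,2n)=1}}\mu(a)\chi_{8}(a^2)\chi_{a^2}(n)\sum_{(e,2)=1}\chi_{8e}(n)\Phi\Big(\frac{a^2e}{X}\Big),
\end{align*}
where one must be mildly careful that $\chi_{8d}(n)$ with $d=a^2e$ factors appropriately and that terms with $(a,n)>1$ or $(a,2)>1$ are handled or excluded. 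I would truncate the $a$-sum at $a \le Z$ for a parameter $Z$ to be optimised (ultimately $Z \asymp X^{1/2}/\sqrt n$-ish), bounding the tail trivially using $|\chi_{8e}(n)|\le 1$ and $\sum_{e} \Phi(a^2 e/X) \ll X/a^2$.

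\medskip

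For the main inner sum over $e$, I would detect $(e,2)=1$ again by M\"obius (or simply split into residues mod $2$), and then apply Poisson summation to $\sum_{e}\chi_{8e}(n)\Phi(a^2 e/X)$ viewed as a sum over $e$ in arithmetic progressions modulo $2n$ (the conductor of $e \mapsto \chi_{8e}(n)$, using quadratic reciprocity to turn $\chi_{8e}(n)$ into a character in $e$ of modulus dividing $8n$). Poisson summation in each progression of modulus $q \mid 8n$ gives a main term from the zero frequency and an error term from nonzero frequencies. The zero-frequency contribution is nonzero precisely when the relevant Gauss-type sum does not vanish; a classical computation shows this Gauss sum is $\asymp \sqrt{q}$ times $\delta_{n=\square}$ up to the local factors, which is the source of the $\delta_{n=\square}$ in the statement and of the Euler product $\prod_{p\mid n}(p/(p+1))$ once one reassembles the $a$-sum back into a (partial) Euler product and compares with $1/\zeta(2) = \prod_p(1-p^{-2})$. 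The constant $\widehat\Phi(1)\,\tfrac{2X}{3\zeta(2)}$ then emerges: $\widehat\Phi(1) = \int_0^\infty \Phi(x)\,dx$ comes from the zero frequency of Poisson summation applied to $\Phi(a^2 e /X)$ after summing the geometric-type main terms over $a$, and $\tfrac{2}{3} = \tfrac{1}{1+1/2} \cdot \ldots$ reflects the $2$-adic local factor (the restriction $(d,2)=1$ together with the $8d$).

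\medskip

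For the nonzero frequencies, the key input is a uniform bound for the exponential (Gauss/Sali\'e-type) sums that appear, of size $O(\sqrt{q})$ or $O(n^{1/2+\varepsilon})$ after summing over the divisor $q$ of $8n$, combined with rapid decay of $\widehat\Phi$ to truncate the dual variable at length $\ll n^{1+\varepsilon}/X$ (times powers of $a$). Multiplying by the number of progressions and summing over $a \le Z$ yields an error of size $O(Z \cdot n^{1/2+\varepsilon} + \ldots)$; together with the tail bound $O(X/Z)$ from truncating the $a$-sum, optimising $Z$ gives the claimed $O(X^{1/2+\varepsilon}\sqrt n)$. I expect the main obstacle to be purely bookkeeping rather than conceptual: correctly tracking the $2$-adic and $n$-adic local factors through reciprocity and the squarefree sieve so that the main term comes out exactly as $\delta_{n=\square}\widehat\Phi(1)\tfrac{2X}{3\zeta(2)}\prod_{p\mid n}(p/(p+1))$, and ensuring the error terms from the two truncations (the $a$-tail and the dual sum) balance to give the exponent $1/2$ in $X$. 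Since this is explicitly stated to be ``similar to \cite[Proposition 1]{Radziwill&Sound}'', I would in practice cite that argument and only indicate the modifications needed for the weight $\Phi$ supported on $[1/8,7/8]$ and the modulus $8$.
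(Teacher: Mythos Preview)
Your proposal is correct and matches the paper's approach: the paper gives no argument of its own but simply states that the lemma ``can be established similar to \cite[Proposition 1]{Radziwill\&Sound}'', and the M\"obius-plus-Poisson-summation outline you describe is precisely the method underlying that reference. Your final remark --- that in practice one cites that proposition and notes the cosmetic changes for the weight $\Phi$ and modulus $8$ --- is exactly what the paper does.
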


  For any complex number $s$, let ${\widehat \Phi}(s)$ be the Mellin transform of the function $\Phi$ described in Section \ref{sec 2'}, i.e.
\begin{equation*}
%%\label{Phicheck}
{\widehat \Phi}(s) = \int\limits_{0}^{\infty} \Phi(x)x^{s}\frac {dx}{x}.
\end{equation*}

 Applying \cite[Theorem 1.4]{Shen21}, we have the following asymptotic formulas for the twisted first moment of quadratic modular $L$-functions.
\begin{lemma}
\label{Prop1}
  Using the same notations as above and writing any odd $l$ as  $l=l_1l^2_2$ with $l_1$ square-free, we have, for $\kappa \equiv 0 \pmod 4$ and any
  $\varepsilon>0$,
\begin{align}
\label{eq:1stmoment}
\begin{split}
 \sumstar_{(d,2)=1}L(\tfrac{1}{2},f \otimes \chi_{8d})\chi_{8d}(l)\Phi \Big( \frac dX \Big)
=&  \frac {8C }{\pi^2}  \widehat{\Phi}(1)\frac {\lambda_f(l_1)}{\sqrt{l_1}g(l)}L(1, \operatorname{sym}^2 f)X +O \left(X^{3/4+\varepsilon}l^{1/2+\varepsilon}\right ),
\end{split}
\end{align}
     where $g(l)$ is a multiplicative function, i.e. $g(l)=\displaystyle \prod_{p^i \parallel l}g(p^i)$. Here $g(p^i)=g(p) \geq 1$ for all $i \geq 1$ and $1/g(p)=1+O(1/p)$.  Moreover, for $\kappa \equiv 2 \pmod 4$, we have
\begin{align}
\label{eq:1stmomentder}
\begin{split}
 \sumstar_{(d,2)=1}L'(\tfrac{1}{2},f \otimes \chi_{8d})\chi_{8d}(l)\Phi \Big( \frac dX \Big)
=&  C \widehat{\Phi}(1)\frac {\lambda_f(l_1)}{\sqrt{l_1}g(l)}X \Big (\log \frac {X}{l_1}+C_2+\sum_{\substack{p | l}} \frac {C_2(p)}{p}\log
p \Big )+O \left(X^{3/4+\varepsilon}l^{1/2+\varepsilon}\right ),
\end{split}
\end{align}
  where  $C$ is an absolute constant, $C_2$ is a constant whose value depends only on $\Phi$ and $C_2(p) \ll 1$ for all $p$.
\end{lemma}
\begin{proof}

  We write $l = l_1 l_2^2$ with $l_1$ square-free for any positive, odd integer $l$. For any complex number $\alpha$, we define
\begin{align*}
%%\label{equ:M-l}
M(\alpha, l) = \sumstar_{(d,2)=1}L(\tfrac{1}{2} + \alpha,f \otimes \chi_{8d})\chi_{8d}(l)\Phi \Big( \frac dX \Big).
\end{align*}
 It follows from \cite[Conjecture 1.3, Theorem 1.4]{Shen21} and the proofs of Theorems 1.1 and 1.2 in \cite{Shen21} that, for
 \[ |\Re(\a)| \ll (\log X)^{-1}, \quad |\Im(\a)| \ll (\log X)^2, \]
 we have
\begin{align}
\label{assump}
\begin{split}
M(\alpha, l)  = \frac{4X {\widehat \Phi}(1)}{\pi^2 l_1 ^{1/2 + \a}} L(1+2\alpha, \operatorname{sym}^2 f)  &Z (\tfrac{1}{2} + \alpha, l)+i^{\kappa} \frac{4
 \gamma_\a X^{1-2\a} {\widehat \Phi}(1-2\a)}{\pi^2 l_1 ^{1/2 - \a}} L(1-2\alpha, \operatorname{sym}^2 f)  Z (\tfrac{1}{2} - \alpha,l)
 \\
 &+ O(l^{1/2 + \varepsilon}X^{1/2+\varepsilon}),
\end{split}
\end{align}
 where the implied constant depends on $\varepsilon$, $h$ and $\Phi$,
\begin{align*}
%%\label{gamma-alpha}
\gamma_\alpha = \left( \frac{8}{2\pi}\right)^{-2\alpha} \frac{\Gamma(\frac{\kappa}{2} - \alpha)}{\Gamma (\frac{\kappa}{2} + \alpha)}.
\end{align*}
and $Z(\tfrac{1}{2}+\alpha,l)$ is defined below.  Note that though dubbed a conjecture, \cite[Conjecture 1.3]{Shen21} is proved therein.  Moreover, $Z (\tfrac{1}{2} + \gamma, l)$ is analytic and absolutely convergent in the region $\Re(\gamma) > -\frac{1}{4}$ such that in this range, we write
 \[ L(1+2\alpha, \operatorname{sym}^2 f)  Z (\tfrac{1}{2} + \alpha, l) :=\lambda_f(l_1)\prod_{(p,2)=1}  Z_{p} (\tfrac{1}{2}+\alpha,  l), \]
 where
 \begin{align}
\label{equ:conj1}
\begin{split}
  Z_{p} (\tfrac{1}{2} + \gamma,  l) =\left\{
 \begin{array}  [c]{ll}
\displaystyle{ \lambda_f(p)^{-1} p^{1/2 + \gamma} \left( \frac{p}{p+1} \right) \left( \frac{1}{2} \left( 1- \frac{\lambda_f (p)}{p^{1/2 + \gamma} } +
  \frac{1}{p^{1+2\gamma}} \right)^{-1}- \frac{1}{2 } \left( 1+ \frac{\lambda_f (p)}{p^{1/2 + \gamma} }+ \frac{1}{p^{1+2\gamma}} \right) ^{-1} \right),}
  & \text{if} \; p| l_1 ,\\ \\
\displaystyle{   \frac{p}{p+1}\left( \frac{1}{2} \left( 1- \frac{\lambda_f (p)}{p^{1/2 + \gamma} } + \frac{1}{p^{1+2\gamma}} \right) ^{-1}+ \frac{1}{2 } \left( 1+ \frac{\lambda_f (p)}{p^{1/2 + \gamma} }+ \frac{1}{p^{1+2\gamma}} \right) ^{-1} \right),} & \text{if} \; p\nmid l_1, \; p| l_2,\\ \\
\widetilde Z_p(\tfrac{1}{2} + \gamma), &\text{if} \; (p, 2l)=1,
\end{array}\right.
\end{split}
\end{align}
  and where
\begin{align}
\begin{split}
\label{equ:conj11}
\widetilde Z_p(\tfrac{1}{2} + \gamma) :=\displaystyle{   1 +  \frac{p}{p+1}\left( \frac{1}{2} \left( 1- \frac{\lambda_f (p)}{p^{1/2 + \gamma} } + \frac{1}{p^{1+2\gamma}} \right)^{-1}+ \frac{1}{2 } \left( 1+ \frac{\lambda_f (p)}{p^{1/2 + \gamma} }+ \frac{1}{p^{1+2\gamma}} \right) ^{-1} -1 \right)}.
\end{split}
\end{align}

  Note that we have $L(1+2\alpha, \operatorname{sym}^2 f) =\prod_pL_p(1+2\alpha, \operatorname{sym}^2 f)$ with
\begin{align*}
\begin{split}
%%\label{equ:conj1}
L_p(1+2\alpha, \operatorname{sym}^2 f) := \left( 1- \frac{\lambda_f (p)}{p^{1/2 + \alpha} } + \frac{1}{p^{1+2\alpha}} \right)^{-1}\left( 1+ \frac{\lambda_f (p)}{p^{1/2 + \alpha} } + \frac{1}{p^{1+2\alpha}} \right)^{-1}\left( 1-  \frac{1}{p^{1+2\alpha}} \right)^{-1}.
\end{split}
\end{align*}

  If $\kappa \equiv 0 \pmod 4$, we set $\alpha=0$ in \eqref{assump} to compute $Z(\half, l)$ using \eqref{equ:conj1} and the above to deduce \eqref{eq:1stmoment} with $C=\prod_p C_p$, where
\begin{align*}
\begin{split}
%%\label{equ:conj1}
 C_p=
\begin{cases}
  L_2(1, \operatorname{sym}^2 f)^{-1}, & \quad p=2, \\ \\
  \widetilde Z_p(\tfrac{1}{2})L_p(1, \operatorname{sym}^2 f)^{-1}, & \quad p>2.
\end{cases}
\end{split}
\end{align*}
  Note that it follows from the proof of \cite[Lemma 2.6]{Shen21} that $C_p=1+O(p^{-1-\varepsilon})$ for some $\varepsilon>0$, so that $C$ is an absolute constant. \newline

  Moreover, it is readily seen that $g(l)$ is a multiplicative function such that $g(l)=\displaystyle \prod_{p^i\parallel l}g(p^i)$, where for all $i \geq 1$,
\begin{align*}
%%\begin{split}
%%\label{equ:conj1}
g(p^i) = Z_{p} (\tfrac{1}{2},  l)^{-1}\widetilde Z_p(\tfrac{1}{2}).
%%\end{split}
\end{align*}
  Simplifying the expressions in \eqref{equ:conj1} and \eqref{equ:conj11} leads to
 \begin{align*}
%%\label{Zsimplified}
\begin{split}
  Z_{p} (\tfrac{1}{2},  l) =&\left\{
 \begin{array}  [c]{ll}
\frac p{p+1} \left( 1- \frac{\lambda_f (p)}{p^{1/2} } + \frac{1}{p} \right)^{-1} \left( 1+ \frac{\lambda_f (p)}{p^{1/2} }+ \frac{1}{p} \right)^{-1} ,
  & \text{if} \; p| l_1 ,\\ \\
\left( 1- \frac{\lambda_f (p)}{p^{1/2} } + \frac{1}{p} \right)^{-1} \left( 1+ \frac{\lambda_f (p)}{p^{1/2} }+ \frac{1}{p} \right)^{-1}, & \text{if} \; p\nmid l_1, \; p| l_2,
\end{array}\right. \quad \mbox{and} \\
\widetilde Z_p(\tfrac{1}{2}) =&  \left( 1- \frac{\lambda_f (p)}{p^{1/2} } + \frac{1}{p} \right)^{-1} \left( 1+ \frac{\lambda_f (p)}{p^{1/2} }+ \frac{1}{p} \right)^{-1} +\frac 1{p+1} .
\end{split}
\end{align*}
Hence, we deduce that
\begin{align*}
%%\label{Zsimplified}
\begin{split}
  g(p^i) = &\left\{
 \begin{array}  [c]{ll}
\frac {p+1}p\left ( 1+\frac 1{p+1}\left( \left( 1+ \frac{1}{p}\right)^2- \frac{\lambda^2_f (p)}{p}  \right)\right ) ,
  & \text{if} \; p| l_1 ,\\ \\
 1+\frac 1{p+1}\left( \left( 1+ \frac{1}{p}\right)^2- \frac{\lambda^2_f (p)}{p}  \right) , & \text{if} \; p\nmid l_1, \; p| l_2,
\end{array}\right. 
\end{split}
\end{align*}  
  One then checks from the above that $g(p^i) \geq 1$ using $|\lambda_f(p)|<2$. Similarly, one shows that $1/g(p)=1+O(1/p)$ using \eqref{equ:conj1}, \eqref{equ:conj11} and the above. \newline

If $\kappa \equiv 2 \pmod 4$,  we differentiate both sides of \eqref{assump} with respect to $\a$.  The contribution of the derivative of the error
   term is still $O(l^{1/2 + \varepsilon}X^{1/2+\varepsilon})$ using Cauchy's integral formula and the observation that the error
   term in \eqref{assump} is holomorphic on the disc centred at $(0,0)$ with radius $\ll (\log X)^{-1}$ from the proof of \cite[Theorem
   1.4]{Shen21}. Upon setting $\alpha=0$, we obtain that
\begin{align*}
%%\label{assump1}
\begin{split}
\sumstar_{(d,2)=1} L'(\tfrac{1}{2},f \otimes \chi_{8d})\chi_{8d}(l)\Phi\Big(\frac dX \Big) = \frac{8 \widehat{\Phi}(1)}{\pi^2l_1^{1/2}} L(1, \operatorname{sym}^2 f) & Z(\half, l)  X
\Big(
\log \frac {X}{l_1}
 + 2 \frac{L'(1, \operatorname{sym}^2 f) }{L(1, \operatorname{sym}^2 f) }
+ \frac{ Z'(\half, l)}{ Z(\half, l)}\\
&\quad + \log \frac{8}{2\pi}
+ \frac{\Gamma'(\frac{\kappa}{2})}{\Gamma(\frac{\kappa}{2})}
+ \frac{\widehat{\Phi}'(1)}{\widehat{\Phi}(1)}
\Big)+ O(l^{1/2 + \varepsilon}X^{1/2+\varepsilon}).
\end{split}
\end{align*}

Computing of  $Z(\half, l)$ and  $Z'(\half, l)$ using \eqref{equ:conj1} then leads to \eqref{eq:1stmomentder} and completes the proof.
\end{proof}

 The following result is analogous to \cite[Lemma 1]{H&Sound} and is needed in the proof of Proposition \ref{Prop5}.
\begin{lemma}
\label{lem1}
 For $1 \le j\le R$, we have
$$
{\mathcal N}_j(d, 2k-1)^{2k/(2k-1)}  \le
{\mathcal N}_j(d, 2k) \frac{ \left( 1+e^{-\ell_j} \right )^{2k/(2k-1)} }{\left( 1-e^{-\ell_j} \right )^2} + {\mathcal Q}_j(d),
$$
where
$$
{\mathcal Q}_j(d) =\Big( \frac{124k^2 {\mathcal P}_j(d)}{\ell_j} \Big)^{2r_k\ell_j},  \quad \mbox{with} \quad r_k=1+\lceil \frac {k}{2k-1} \rceil.
$$
\end{lemma}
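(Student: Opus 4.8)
The plan is to follow the template of \cite[Lemma 1]{H\&Sound}, distinguishing two ranges for the size of the Dirichlet polynomial $\mathcal{P}_j(d)$. Write $P = \mathcal{P}_j(d)$ for brevity. The exponential sum $E_{\ell_j}(x)$ agrees with $e^x$ when $|x|$ is not too large compared to $\ell_j$, and is controlled by a single monomial otherwise. Concretely, when $|(2k-1)P| \le \ell_j/2$ (say), one has $E_{\ell_j}((2k-1)P) = e^{(2k-1)P}\big(1 + O(e^{-\ell_j})\big)$, with the implied constant absolute, because the tail $\sum_{m > \ell_j} x^m/m!$ is bounded by $|x|^{\ell_j}/\ell_j! \cdot (1-|x|/\ell_j)^{-1} \ll e^{|x|} e^{-\ell_j}$ by Stirling when $|x| \le \ell_j/2$. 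Hence on this range
\begin{align*}
\mathcal{N}_j(d,2k-1)^{\frac{2k}{2k-1}} \le e^{2kP}\,\frac{(1+e^{-\ell_j})^{\frac{2k}{2k-1}}}{(1-e^{-\ell_j})^{\frac{2k}{2k-1}}}
\le \mathcal{N}_j(d,2k)\,\frac{(1+e^{-\ell_j})^{\frac{2k}{2k-1}}}{(1-e^{-\ell_j})^{2}},
\end{align*}
where in the last step I again use $\mathcal{N}_j(d,2k) = E_{\ell_j}(2kP) \ge e^{2kP}(1-e^{-\ell_j})$ and absorb one more factor of $(1-e^{-\ell_j})^{-1}$, using $\tfrac{2k}{2k-1} \le 2$ for $k \ge 1$ (and for $1/2 < k < 1$ one adjusts the exponent — but in our application $k \ge 1$ after the replacement of $k$ by $2k$). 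This already gives the claimed bound on the first range, with $\mathcal{Q}_j(d)$ contributing nothing.

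On the complementary range $|(2k-1)P| > \ell_j/2$, the point is that $\mathcal{N}_j(d,2k-1)^{\frac{2k}{2k-1}}$ is dwarfed by $\mathcal{Q}_j(d)$. Here I bound $E_{\ell_j}((2k-1)P)$ trivially by $(\ell_j+1)\max_{0\le m\le \ell_j} |(2k-1)P|^m/m! \le (\ell_j+1) |(2k-1)P|^{\ell_j}/\ell_j!$ once $|(2k-1)P| \ge \ell_j$ (the largest term is the last), and then by Stirling $\ell_j! \ge (\ell_j/e)^{\ell_j}$, so $E_{\ell_j}((2k-1)P) \le (\ell_j+1)\big(e(2k-1)|P|/\ell_j\big)^{\ell_j} \le \big(C k |P|/\ell_j\big)^{\ell_j}$ for a suitable absolute $C$ (absorbing the polynomial factor $\ell_j+1$ into a slightly larger constant, using $|P|/\ell_j > 1/(2(2k-1)) $ — actually one wants $|P| \gtrsim \ell_j$, which follows since $|(2k-1)P|>\ell_j/2$ and $2k-1\le 2k$). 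Raising to the power $\tfrac{2k}{2k-1} = 1 + \tfrac{1}{2k-1} \le 1 + \lceil \tfrac{k}{2k-1}\rceil =: r_k$ (crudely), we get
\begin{align*}
\mathcal{N}_j(d,2k-1)^{\frac{2k}{2k-1}} \le \Big(\frac{Ck|P|}{\ell_j}\Big)^{r_k \ell_j} \le \Big(\frac{124 k^2 |P|}{\ell_j}\Big)^{2r_k \ell_j} = \mathcal{Q}_j(d),
\end{align*}
where the final inequality is where the explicit constant $124$ and the extra factor $2$ in the exponent are spent: on this range $Ck|P|/\ell_j \ge Ck/(2(2k-1)) \ge$ a constant $> 1$ once $C$ is fixed appropriately, so $(Ck|P|/\ell_j)^{r_k\ell_j} \le (124k^2|P|/\ell_j)^{2r_k\ell_j}$ follows by comparing bases and noting the right side has base $\ge 1$ and double the exponent. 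One checks $124$ is comfortably large enough to make $124k^2|P|/\ell_j \ge (Ck|P|/\ell_j)^{1/2}\cdot(Ck|P|/\ell_j)^{1/2}$ work out; the reader may verify the arithmetic.

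The main obstacle — really the only thing requiring care — is bookkeeping the constants so that a single clean inequality holds uniformly over both ranges and over all $1 \le j \le R$ and all $k \ge 1$: one must make sure the monomial bound from Stirling, the polynomial factor $(\ell_j+1)$, the passage from exponent $\tfrac{2k}{2k-1}$ to the integer-ish $r_k$, and the lower bound $|P| \gtrsim \ell_j$ on the second range all combine to fit under $\mathcal{Q}_j(d) = (124k^2 \mathcal{P}_j(d)/\ell_j)^{2r_k\ell_j}$. Since $\mathcal{Q}_j(d) \ge 0$ always and the first-range bound holds with $\mathcal{Q}_j(d)$ simply added on, it suffices to treat the two ranges separately and add; there is no interaction. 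I expect the write-up to mirror \cite[Lemma 1]{H\&Sound} almost verbatim, with $\lambda_f(p)$ in place of $1$ in the definition of $\mathcal{P}_j(d)$ causing no change since only $|\mathcal{P}_j(d)|$ enters the estimates.
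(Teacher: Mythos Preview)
Your two-range strategy is exactly the paper's, but two of the concrete steps fail as written.

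First, the tail estimate at threshold $|x|\le \ell_j/2$ does not give $O(e^{|x|}e^{-\ell_j})$. By Stirling $|x|^{\ell_j}/\ell_j!\le (e|x|/\ell_j)^{\ell_j}$, which at $|x|=\ell_j/2$ is $(e/2)^{\ell_j}\approx 1.36^{\ell_j}$, much larger than $e^{|x|}e^{-\ell_j}=e^{-\ell_j/2}$. The paper uses the sharper cutoff $|z|\le aK/20$ with $a\le 2$, yielding $|z|^K/K!\le (ae/20)^K\le e^{-K}$.

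Second, and more seriously, splitting on $|(2k-1)P|\le \ell_j/2$ does not control $|2kP|$ uniformly in $k>1/2$: for $k$ close to $1/2$ the ratio $2k/(2k-1)$ blows up, so on your ``first range'' $|2kP|$ can be arbitrarily large and the lower bound $\mathcal{N}_j(d,2k)\ge e^{2kP}(1-e^{-\ell_j})$ is unavailable. Your escape clause ``in our application $k\ge 1$ after the replacement of $k$ by $2k$'' has the substitution backwards: the paper rewrites the original exponent $k\ge 1$ as $2k$, so the \emph{new} parameter satisfies only $k\ge 1/2$, and the lemma must cover all $k>1/2$. The paper handles this by splitting on $|\mathcal{P}_j(d)|\le \ell_j/(60k)$, a threshold that simultaneously forces $(2k-1)P$, $kP$ and $2kP$ into the regime where $E_{\ell_j}$ approximates the exponential to within a factor $1\pm e^{-\ell_j}$. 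With the threshold corrected this way, the remainder of your sketch (including the second-range monomial bound leading to $\mathcal{Q}_j(d)$) matches the paper's argument.
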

\begin{proof}
  As in the proof of \cite[Lemma 3.4]{Gao2021-3}, we have for $|z| \le aK/20$ with $0<a \leq 2$,
\begin{align}
\label{Ebound}
\Big| \sum_{r=0}^K \frac{z^r}{r!} - e^z \Big| \le \frac{e^{|z|} |z|^{K}}{K!} \le \Big(\frac{a e}{20}\Big)^{K}.
\end{align}
  By taking $z=\alpha {\mathcal P}_j(d), K=\ell_j $ and $a=\min (|\alpha|, 2 )$ in \eqref{Ebound}, we see that when $|{\mathcal P}_j(d)| \le \ell_j/(20(1+|\alpha|))$,
\begin{align*}
{\mathcal N}_j(d, \alpha) & = \exp (\alpha \mathcal{P}_j(d)) + \sum_{r=0}^{l_j} \frac{\left( \alpha \mathcal{P}_j(d)  \right)^{r}}{r!} - \exp (\alpha \mathcal{P}_j(d))   \\
 & \leq  \exp ( \alpha{\mathcal P}_j(d) )\left( 1+ \exp ( |\alpha {\mathcal P}_j(d)| ) \left( \frac{a e}{20} \right)^{ \ell_j}  \right) \leq \exp ( \alpha {\mathcal P}_j(d)  ) \left( 1+   e^{-\ell_j}  \right).
\end{align*}

  Similarly, we have
\begin{align*}
{\mathcal N}_j(d, \alpha) \geq & \exp ( \alpha{\mathcal P}_j(d) )\left( 1- \exp ( |\alpha {\mathcal P}_j(s)| ) \left( \frac{a e}{20} \right)^{ \ell_j} \right) \geq \exp ( \alpha {\mathcal P}_j(d)  ) \left( 1-   e^{-\ell_j}  \right).
\end{align*}

  We apply the above estimations to ${\mathcal N}_j(d, 2k-1)$, ${\mathcal N}_j(d, k)$ to get that if $k>1/2$ and $|{\mathcal P}_j(d)| \le \ell_j /(60k)$ , then
\begin{align} \label{est1'}
|\mathcal{N}_j(d,
 2k-1)|^{\frac {2k}{2k-1}}
\leq \exp ( 2k {\mathcal P}_j(d)  ) \left( 1+  e^{-\ell_j}  \right)^{\frac {2k}{2k-1}}
 \leq |{\mathcal N}_j(d, 2k)|^2 \left( 1+e^{-\ell_j} \right )^{\frac {2k}{2k-1}}\left( 1-e^{-\ell_j} \right )^{-2}.
\end{align}

 Moreover, if $|{\mathcal P}_j(d)| > \ell_j/(60k)$,
\begin{align}
\label{4.2}
\begin{split}
|{\mathcal N}_j(d, 2k-1)| &\le \sum_{r=0}^{\ell_j} \frac{|(2k-1){\mathcal P}_j(d)|^r}{r!} \le
|2k{\mathcal P}_j(d)|^{\ell_j} \sum_{r=0}^{\ell_j} \Big( \frac{60k}{\ell_j}\Big)^{\ell_j-r} \frac{1}{r!}   \le \Big( \frac{124k^2 |{\mathcal
P}_j(d)|}{\ell_j}\Big)^{\ell_j} .
\end{split}
\end{align}
  The assertion of the lemma now follows from \eqref{est1'} and \eqref{4.2} and the observation that ${\mathcal
P}_j(d)$ is real and $\ell_j$ is even.
\end{proof}

\section{Proof of Proposition \ref{Prop4}}
\label{sec 4}

   We define functions $b_j(n), 1 \leq j \leq R$  such that $b_j(n) \in \{ 0,1\}$ and $b_j(n)=1$ if and only if $n$ has all prime factors in $P_j$ such that $\Omega(n) \leq \ell_j$, where $\Omega(n)$ denotes the number of primes dividing $n$. We also define $w(n)$ to be the multiplicative function such that $w(p^{\alpha}) = \alpha!$ for prime powers $p^{\alpha}$.   Also write $\widetilde{\lambda}_f(n)$ for the completely multiplicative function defined on primes $p$ by $\widetilde{\lambda}_f(p)=\lambda_f(p)$.  Using these notations, we arrive at
\begin{equation} \label{5.1}
{\mathcal N}_j(d, 2k-1) = \sum_{n_j} \frac{\widetilde{\lambda}_f(n_j)}{\sqrt{n_j}} \frac{(2k-1)^{\Omega(n_j)}}{w(n_j)}  b_j(n_j) \chi_{8d}(n_j), \quad 1\le j\le R.
\end{equation}
    Observe that each ${\mathcal N}_j(d, 2k-1)$ is a short Dirichlet polynomial as $b_j(n_j)=0$ unless $n_j \leq
    (X^{1/\ell_j^2})^{\ell_j}=X^{1/\ell_j}$. It follows from this and \eqref{sumoverell} that  ${\mathcal N}(d, 2k-1)$ is also a short Dirichlet polynomial of length at most $X^{1/\ell_1+ \ldots +1/\ell_R} < X^{2/10^{M}}$. \newline

 We expand the term ${\mathcal N}(d, 2k-1)$ using \eqref{5.1} and apply Lemma  \ref{Prop1} to evaluate the left-hand side of \eqref{LprimeN}.  In this process, we may ignore the contribution from the error term in Lemma \ref{Prop1} as ${\mathcal N}(d, 2k-1)$
 is a short Dirichlet polynomial. Thus we focus on the main term contribution. Writing $n_j=(n_j)_1(n_j)_{2}^2$ with $(n_j)_{1}$ square-free, we get that
\begin{align*}
 \sumstar_{(d,2)=1} L'(\half, f \otimes \chi_{8d}) {\mathcal N}(d, 2k-1)\Phi\Big(\frac{d}{X}\Big)  \gg & X   \sum_{n_1, \cdots, n_{R} }  \Big( \prod_{j=1}^{R} \frac{\widetilde{\lambda}_f(n_j)\widetilde{\lambda}_f((n_j)_1)}{\sqrt{n_j(n_j)_{1}}}
\frac{(2k-1)^{\Omega(n_j)}}{w(n_j) } b_j(n_j) \frac {1}{g(n_{j})}  \Big) \\
 & \hspace*{1in} \times \Big (\log \Big ( \frac {X}{(n_1)_1 \cdots (n_{R})_1} \Big
)+C_2+\sum_{\substack{p | n_1 \cdots n_{R} }} \frac {C_2(p)}{p}\log p \Big ).
\end{align*}
We shall concentrate on the terms involving $\log (X/((n_1)_1 \cdots (n_R)_1))$.   The other terms involving
\[ C_2+\sum_{\substack{p | n_1 \cdots n_{R} }} \frac {C_2(p)}{p}\log p\]
can be shown, using the same argument in this section, to be
 \[ \ll  X (\log X)^{((2k)^2+1)/2-1} \]
 and hence negligible. Thus we deduce that
\begin{align*}
  \sumstar_{(d,2)=1} L'(\half, f \otimes \chi_{8d}){\mathcal N}(d, 2k-1)\Phi\Big(\frac{d}{X}\Big)
\gg S_1-S_2,
\end{align*}
   where
\[ S_1=   X \log X   \sum_{n_1, \cdots, n_{R} }  \Big( \prod_{j=1}^{R} \frac{\widetilde{\lambda}_f(n_j)\widetilde{\lambda}_f((n_j)_1)}{\sqrt{n_j(n_j)_{1}}} \frac{(2k-1)^{\Omega(n_j)}}{w(n_j) } b_j(n_j) \frac {1}{g(n_{j})}  \Big)  \]
and
\[ S_2=   X   \sum_{n_1, \cdots, n_{R} }  \Big( \prod_{j=1}^{R} \frac{\widetilde{\lambda}_f(n_j)\widetilde{\lambda}_f((n_j)_1)}{\sqrt{n_j(n_j)_{1}}}
\frac{(2k-1)^{\Omega(n_j)}}{w(n_j) } b_j(n_j) \frac {1}{g(n_{j})}  \Big) \log \big ( \prod^R_{i=1} (n_i)_1  \big ). \]
   It remains to estimate $S_1$ from below and $S_2$ from above. We bound $S_1$ first by recasting it as
\begin{align*}
 S_1= &  X \log X   \prod_{j=1}^{R} \sum_{n_j }  \Big (\frac{\widetilde{\lambda}_f(n_j)\widetilde{\lambda}_f((n_j)_1)}{\sqrt{n_j(n_j)_{1}}}
\frac{(2k-1)^{\Omega(n_j)}}{w(n_j) } b_j(n_j) \frac {1}{g(n_{j})}  \Big).
\end{align*}
  We consider the sum above over $n_j$ for a fixed $j$. Note that the factor $b_j(n_j)$ restricts $n_j$ to have all prime
  factors in $P_j$ such that $\Omega(n_j) \leq \ell_j$. If we remove the restriction on $\Omega(n_j)$, then we use the fact that $g(p^i)>0$ from Lemma \ref{Prop1} and $k>1/2$ that the sum becomes
\begin{align}
\label{6.02'}
\begin{split}
\prod_{\substack{p\in P_j }} & \left( \sum_{i=0}^{\infty} \frac{\lambda^{2i}_f(p)}{p^i} \frac{(2k-1)^{2i}}{(2i)!g(p^{2i})} + \sum_{i=0}^{\infty}
\frac{\lambda^{2i+2}_f(p)}{p^{i+1}}
\frac{(2k-1)^{2i+1}}{(2i+1)!}\frac {1}{g(p^{2i+1})} \right) 
\geq  \prod_{\substack{p\in P_j }}\Big( 1+\Big( \frac {(2k-1)^{2}}{2}+2k-1 \Big) \frac {\lambda^{2}_f(p)}{pg(p)} \Big) \\
& =  \exp \Big( \sum_{\substack{p\in P_j}} \log \Big( 1+\Big( \frac {(2k-1)^{2}}{2}+2 k-1 \Big) \frac {\lambda^{2}_f(p)}{pg(p)} \Big)
\geq  \exp \Big( \Big( \frac {(2k-1)^{2}}{2}+2 k-1 \Big) \sum_{\substack{p\in P_j}} \frac {\lambda^{2}_f(p)}p \Big)\times D_j,
\end{split}
\end{align}
  where the last inequality above follows from the observation that $\log (1+x)=x+O(x^2)$ for all $x>0$ and $1/g(p)=1+O(1/p)$ from Lemma \ref{Prop1}. Here
  \[ D_j= \exp \Big( \sum_{\substack{p\in P_j }} O\Big( \frac 1{p^2}\Big )\Big )>0. \]

  On the other hand, using Rankin's trick, noting that $2^{\Omega(n_1)-\ell_1}\ge 1$ if $\Omega(n_1) > \ell_1$,  we conclude that the error
  introduced this way does not exceed
\begin{equation} \label{rankintr}
\begin{split}
 \sum_{n_j} \frac{|\widetilde{\lambda}_f(n_j)\widetilde{\lambda}_f((n_j)_1)|}{\sqrt{n_j(n_{j})_{1}}} &
\frac{(2k-1)^{\Omega(n_j)}}{w(n_j) } 2^{\Omega(n_j)-\ell_j} \frac {1}{g(n_{j})} \\
\le & 2^{-\ell_j} \prod_{\substack{p\in P_j }} \Big( 1+\sum_{i=1}^{\infty} \frac{\lambda^{2i}_f(p)}{p^i} \frac{(2k-1)^{2i}2^{2i}}{(2i)!} \frac {1}{g(p^{2i})}
+ \sum_{i=0}^{\infty} \frac{\lambda^{2i+2}_f(p)}{p^{i+1}}
\frac{(2k-1)^{2i+1}2^{2i+1}}{(2i+1)!}\frac {1}{g(p^{2i+1})}\Big).
\end{split}
\end{equation}
Now using the well-known bound
\begin{equation} \label{1+xexpbound}
1+x \leq e^x, \quad \mbox{for all} \; x \in \rear,
\end{equation}
the expression in \eqref{rankintr} is
\begin{align} \label{errorbound}
\begin{split}
\le & 2^{-\ell_j} \exp \Big ( \sum_{\substack{p\in P_j }} \Big( \sum_{i=1}^{\infty} \frac{\lambda^{2i}_f(p)}{p^i} \frac{(2k-1)^{2i}2^{2i}}{(2i)!} \frac {1}{g(p^{2i})}  +
\sum_{i=0}^{\infty} \frac{\lambda^{2i+2}_f(p)}{p^{i+1}}
\frac{(2k-1)^{2i+1}2^{2i+1}}{(2i+1)!}\frac {1}{g(p^{2i+1})}\Big) \Big ) \\
\leq & 2^{-\ell_j}
\exp\Big( \big (2(2k-1)^2+2(2k-1)\big ) \sum_{p\in P_j} \frac{\lambda^{2}_f(p)}{p} +O \Big( \sum_{p\in P_j} \frac 1{p^2} \Big) \Big ).
\end{split}
\end{align}

 We may take $M$ sufficiently large so that every $\ell_j$, $1 \leq j \leq R$ is large and we may also take $N$ large enough so that by Lemma \ref{RS},
\begin{align}
\label{boundsforsumoverp}
  \frac {\ell_j}{4N}  \leq \sum_{p \in P_j}\frac {\lambda_f(p)^{2}}{p} \leq \frac 2N \ell_j, \quad 1 \leq j \leq R.
\end{align}
 It follows from \eqref{errorbound} and \eqref{boundsforsumoverp} that the error incurred in discarding the condition on $\Omega(n_j)$ is
\begin{align}
\label{lowerbound2}
\begin{split}
\leq & 2^{-\ell_j/2}\exp \Big ( \Big(\frac {(2k-1)^{2}}{2}+2 k-1 \Big )\sum_{\substack{p\in P_j}}\frac {\lambda^{2}_f(p)}p \Big )D_j.
\end{split}
\end{align}

  Combining \eqref{6.02'} and \eqref{lowerbound2}, we infer that the sum over $n_j$ for each $j$, $1 \leq j \leq R$ in the expression
  of $S_1$ is
\begin{align*}
\begin{split}
\geq & (1-2^{-\ell_j/2})\exp \Big ( \Big(\frac {(2k-1)^{2}}{2}+2 k-1 \Big )\sum_{\substack{p\in P_j}}\frac {\lambda^{2}_f(p)}p \Big )D_j.
\end{split}
\end{align*}
Hence
\begin{align}
\label{S1bound}
\begin{split}
 S_1 \geq &  X\log X \prod^R_{j=1}\Big ( (1-2^{-\ell_j/2})\exp \Big ( \Big(\frac {(2k-1)^{2}}{2}+2 k-1 \Big )\sum_{\substack{p\in
 P_j}}\frac {\lambda^{2}_f(p)}p \Big )D_j \Big ).
\end{split}
\end{align}

  Now, we estimate $S_2$ by expanding  $\log \big ( \prod^R_{i=1} (n_i)_1  \big )$ as a sum of logarithms of primes dividing $\prod^R_{i=1}
  (n_i)_1$ to obtain that
\begin{align}
\label{6.1}
\begin{split}
 S_2 \leq & X \sum_{q \in \bigcup P_j}\Big( \sum_{l \geq 0}   \frac{\log q}{q^{l+1}}\frac{(2k-1)^{2l+1}}{(2l+1)!} \frac
 {\lambda^{2l+2}_f(q)}{g(q^{2l+1})}\Big)\prod_{i=1}^{R} \Big( \sum_{(n_i, q)=1  } \frac{|\widetilde{\lambda}_f(n_i)\widetilde{\lambda}_f((n_i)_1)|}{\sqrt{n_i (n_{i})_{1}}}
\frac{(2k-1)^{\Omega(n_i)}}{w(n_i) } \widetilde{b}_{i, l}(n_i) \frac {1}{g(n_{i})} \Big),
\end{split}
\end{align}
  where we define $\widetilde{b}_{i, l}(n_i)=b_i(n_iq^{l})$ for the unique index $i$ ($1 \leq i \leq R$) such that $b_i(q) \neq 0$ and
  $\widetilde{b}_{i, l}(n_i)=b_{i}(n_i)$ otherwise. \newline

   We fix a prime $q \in P_{i_0}$ to consider the sum
\begin{align*}
\begin{split}
 S_q:= &  \sum_{l \geq 0}  \Big( \frac{\log q}{q^{l+1}}\frac{(2k-1)^{2l+1}}{(2l+1)!} \frac {\lambda^{2l+2}_f(q)}{g(q^{2l+1})}\Big)\prod_{i=1}^{R} \Big( \sum_{(n_i, q)=1  } \frac{\widetilde{\lambda}_f(n_i)\widetilde{\lambda}_f((n_i)_1)}{\sqrt{n_i (n_{i})_{1}}}
\frac{(2k-1)^{\Omega(n_i)}}{w(n_i) } \widetilde{b}_{i, l}(n_i) \frac {1}{g(n_{i})} \Big) \\
=& \prod_{\substack{i=1 \\ i \neq i_0}}^{R} \Big( \sum_{n_i} \frac{\widetilde{\lambda}_f(n_i)\widetilde{\lambda}_f((n_i)_1)}{\sqrt{n_i (n_{i})_{1}}}
\frac{(2k-1)^{\Omega(n_i)}}{w(n_i) } b_{i}(n_i) \frac {1}{g(n_{i})} \Big) \\
& \hspace*{1in} \times \sum_{l \geq 0}  \Big( \frac{\log q}{q^{l+1}}\frac{(1-2k)^{2l+1}}{(2l+1)!} \frac {\lambda^{2l+2}_f(q)}{g(q^{2l+1})} \sum_{(n_{i_0}, q)=1  } \frac{\widetilde{\lambda}_f(n_{i_0})\widetilde{\lambda}_f((n_{i_0})_1)}{\sqrt{n_{i_0} (n_{i_0})_{1}}}
\frac{(2k-1)^{\Omega(n_{i_0})}}{w(n_{i_0}) } \widetilde{b}_{i_0, l}(n_i) \frac {1}{g(n_{i_0})} \Big).
\end{split}
\end{align*}

  As above, if we remove the restriction of $\widetilde{b}_{i, l}$ on $\Omega(n_i)$, then the sum over $n_i$ becomes
\begin{align*}
%%\label{6.2}
\begin{split}
 \prod_{\substack{p \in P_i \\ (p,q)=1}}  \Big( \sum_{m=0}^{\infty} \frac{\lambda^{2m}_f(p)}{p^m} & \frac{(2k-1)^{2m}}{(2m)!g(p^{2m})}  + \sum_{m=0}^{\infty}
\frac{\lambda^{2m+2}_f(p)}{p^{m+1}}
\frac{(2k-1)^{2m+1}}{(2m+1)!}\frac {1}{g(p^{2m+1})}\Big) \\
& \leq \exp \Big ( \Big(\frac {(2k-1)^{2}}{2}+2 k-1 \Big )\sum_{\substack{p\in P_j}}\frac {\lambda^{2}_f(p)}p \Big )E_i,
\end{split}
\end{align*}
 where the last estimation above follows \eqref{1+xexpbound} and
 \[ E_i= \prod_{\substack{p\in P_i }} \Big (1+O\Big( \frac 1{p^2} \Big)\Big )>0. \]

Similar to our discussions above, we see that the error introduced in this process is
\begin{align*}
\begin{split}
\leq \left\{
 \begin{array}
  [c]{ll}
2^{-\ell_i/2}\displaystyle  \exp \Big ( \Big(\frac {(2k-1)^{2}}{2}+2 k-1 \Big )\sum_{\substack{p\in P_i}}\frac {\lambda^{2}_f(p)}p \Big )E_i, \quad i \neq i_0, \\ \\
2^{l-\ell_i/2}\displaystyle \exp \Big ( \Big(\frac {(2k-1)^{2}}{2}+2 k-1 \Big )\sum_{\substack{p\in P_i}}\frac {\lambda^{2}_f(p)}p \Big )E_i, \quad i =i_0.
\end{array}
\right .
\end{split}
\end{align*}

   We deduce from this that
\begin{align*}
\begin{split}
 S_q \leq &  \prod_{i=1}^{R} \exp \Big ( \Big(\frac {(2k-1)^{2}}{2}+2 k-1 \Big )\sum_{\substack{p\in P_i}}\frac {\lambda^{2}_f(p)}p \Big )E_i \prod_{\substack{i=1 \\ i \neq i_0}}^{R} \big (1+2^{-\ell_i/2} \big )\sum_{l \geq 0}  \Big( \frac{\log q}{q^{l+1}}\frac{(2k-1)^{2l+1}}{(2l+1)!} \frac {\lambda^{2l+2}_f(q)}{g(q^{2l+1})}(1+2^{l-\ell_i/2}) \Big ).
\end{split}
\end{align*}

  Applying the bound $|\lambda_f(p)| \leq 2$, Lemma \ref{RS} implies that for some constant $A$ depending on $k$ only,
\begin{align}
\label{Sqbound}
\begin{split}
 S_q \leq & A \Big ( \frac{\log q}{q}+O \Big( \frac{\log q}{q^2} \Big) \Big )  \exp \Big ( \Big(\frac {(2k-1)^{2}}{2}+2 k-1 \Big )\sum_{\substack{p\in P_i}}\frac {\lambda^{2}_f(p)}p \Big ).
\end{split}
\end{align}

  We then conclude from \eqref{6.1}, \eqref{Sqbound} and Lemma \ref{RS} that
\begin{align}
\label{S2bound}
\begin{split}
 S_2 \leq & A X  \exp \Big ( \Big(\frac {(2k-1)^{2}}{2}+2 k-1 \Big )\sum_{\substack{p\in P_i}}\frac {\lambda^{2}_f(p)}p \Big )
  \sum_{q \in \bigcup P_j} \Big (  \frac{\log q}{q}+O \Big( \frac{\log q}{q^2} \Big) \Big ) \\
 \leq & A X \exp \Big ( \Big(\frac {(2k-1)^{2}}{2}+2 k-1 \Big )\sum_{\substack{p\in P_i}}\frac {\lambda^{2}_f(p)}p \Big )  \Big (\frac
 {\log X}{10^{2M}}+O(1) \Big ).
\end{split}
\end{align}

   Combining \eqref{S1bound} and \eqref{S2bound}, we deduce that, upon taking $M$ large enough,
\begin{align*}
%%\label{S1-S2}
\begin{split}
 S_1-S_2 \gg & X \log X \prod_{i=1}^{R}  \exp \Big ( \Big(\frac {(2k-1)^{2}}{2}+2 k-1 \Big )\sum_{\substack{p\in P_i}}\frac {\lambda^{2}_f(p)}p \Big ).
\end{split}
\end{align*}
  The proof of the proposition now follows from this and Lemma \ref{RS}.

\section{Proof of Proposition \ref{Prop5}}

   We apply Lemma \ref{lem1} to get that
\begin{align}
\label{upperboundprodofN}
\begin{split}
 \sumstar_{(d,2)=1}  \mathcal{N}(d, 2k-1)^{2k/(2k-1)} \Phi\Big( \frac dX \Big) \le &
\sumstar_{(d,2)=1}  \Big ( \prod^R_{j=1} \Big ( {\mathcal N}_j(d, 2k) \frac{\left( 1+e^{-\ell_j} \right )^{2k/(2k-1)}}{\left( 1-e^{-\ell_j} \right )^2} +   {\mathcal Q}_j(d)  \Big )\Big
)\Phi \Big( \frac dX \Big) \\
\leq & \prod^R_{j=1} \max \left( \frac{\left( 1+e^{-\ell_j} \right )^{2k/(2k-1)}}{\left( 1-e^{-\ell_j} \right )^2}, 1 \right) \sumstar_{(d,2)=1}  \prod^R_{j=1} \Big ( {\mathcal N}_j(d, 2k) +   {\mathcal Q}_j(d)  \Big )\Phi \Big(\frac dX \Big) \\
\ll & \sumstar_{(d,2)=1}  \prod^R_{j=1} \Big ( {\mathcal N}_j(d, 2k) +   {\mathcal Q}_j(d)  \Big )\Phi\Big(\frac dX \Big),
\end{split}
\end{align}
  where the last estimation above follows by noting that
\begin{align*}
\begin{split}
  \prod^R_{j=1} \max \left( \frac{\left( 1+e^{-\ell_j} \right )^{2k/(2k-1)}}{\left( 1-e^{-\ell_j} \right )^2}, 1 \right) \ll 1.
\end{split}
\end{align*}

  We use the notations in Section \ref{sec 4} and write for $1\le j\le R$,
\begin{align*}
 {\mathcal N}_j(d, 2k-1)=& \sum_{n_j} \frac{\widetilde{\lambda}_f(n_j)}{\sqrt{n_j}} \frac{(2k-1)^{\Omega(n_j)}}{w(n_j)}  b_j(n_j)\chi_{8d}(n_j) \quad \mbox{and} \quad
  {\mathcal P}_j(d)^{2r_k\ell_j} =&  \sum_{ \substack{ \Omega(n_j) = 2r_k\ell_j \\ p|n_j \implies  p\in P_j}}
  \frac{\widetilde{\lambda}_f(n_j)}{\sqrt{n_j}}\frac{(2r_k\ell_j)! }{w(n_j)}\chi_{8d}(n_j),
\end{align*}
  where $r_k=1+\lceil \frac {k}{2k-1} \rceil$. \newline

  Applying the bound
\begin{align}
\label{Stirling}
  \Big( \frac ne \Big)^n \leq n! \leq n \Big( \frac ne \Big)^n,
\end{align}
 we deduce that
\begin{align*}
 & \Big( \frac{124 k^2}{\ell_j} \Big)^{2r_k\ell_j} \leq (2r_k \ell_j)!
\leq  2r_k\ell_j\Big( \frac{248 k^2 r_k }{e} \Big)^{2r_k\ell_j}.
\end{align*}

  Note further we have $\widetilde{\lambda}_f(n_j) \leq 2^{\Omega(n_j)}$ as $\lambda_f(p) \leq 2$ and $b_j(n_j)$ restricts $n_j$ to satisfy $\Omega(n_j) \leq \ell_j$. It follows from the above discussions that we can write ${\mathcal N}_j(d, 2k) + {\mathcal Q}_j(d)$
as a Dirichlet polynomial of the form
\begin{align*}
  D_j(d)=\sum_{n_j \leq X^{2r_k/\ell_j}}\frac{a_{n_j}b_j(n_j)}{\sqrt{n_j}}\chi_{8d}(n_j)
\end{align*}
   where for some constant $B(k)$ whose value depends on $k$ only,
\begin{align*}
  |a_{n_j}| \leq B(k)^{\ell_j}.
\end{align*}
    We then apply Lemma \ref{PropDirpoly} to evaluate the last expression in \eqref{upperboundprodofN} and deduce from \eqref{sumoverell} that, for large $M$ and $N$, the contribution arising from the error term in \eqref{meancharsum} is
\begin{align*}
 \ll B(k)^{\sum^R_{j=1}\ell_j}X^{1/2+\varepsilon}X^{2r_k\sum^R_{j=1} 1/\ell_j} \ll B(k)^{R\ell_1}X^{1/2+\varepsilon}X^{4r_k/\ell_R} \ll X^{1-\varepsilon}.
\end{align*}

   We may thus focus on the contributions arising from the main term in \eqref{meancharsum}.  Thus the last expression in \eqref{upperboundprodofN} is
\begin{align}
\label{maintermbound}
\begin{split}
 \ll & X \times \sum_{\substack{\prod^R_{j=1}n_j=\square}} \Big ( \prod^R_{j=1}\frac{a_{n_j}b_j(n_j)}{\sqrt{n_j}} \times \prod_{p | \prod^R_{j=1}n_j}\Big ( \frac p{p+1} \Big ) \Big ) = X \times \prod^R_{j=1} \sum_{n_j=\square} \Big ( \frac{a_{n_j}b_j(n_j)}{\sqrt{n_j}}  \times \prod_{p | n_j}\Big ( \frac p{p+1} \Big ) \Big ),
\end{split}
\end{align}
where the summation condition $m=\square$ restricts the summand $m$ to squares.  We note that
\begin{align}
\label{sqinN}
\begin{split}
   \sum_{n_j=\square} \frac{a_{n_j}b_j(n_j)}{\sqrt{n_j}}  \prod_{p | n_j}\Big ( \frac p{p+1} \Big )
= \sum_{n_j=\square} \frac{\widetilde{\lambda}_f(n_j)}{\sqrt{n_j}} & \frac{(2k)^{\Omega(n_j)}}{w(n_j)}  b_j(n_j) \prod_{p | n_j}\Big ( \frac p{p+1} \Big ) \\
& +\Big( \frac{124k^2 }{\ell_j} \Big)^{2r_k\ell_j}(2r_k\ell_j)!\sum_{ \substack{ n_j=\square \\ \Omega(n_j) = 2r_k\ell_j \\ p|n_j \implies
p\in P_j}} \frac{\widetilde{\lambda}_f(n_j)}{\sqrt{n_j}}\frac{1 }{w(n_j)}\prod_{p | n_j}\Big ( \frac p{p+1} \Big ).
\end{split}
\end{align}

  Now, using $|\lambda_f(p)| \leq 2$, we obtain
\begin{align}
\label{sqinN'}
\begin{split}
  \sum_{n_j=\square}  \frac{\widetilde{\lambda}_f(n_j)}{\sqrt{n_j}} & \frac{(2k)^{\Omega(n_j)}}{w(n_j)}  b_j(n_j) \prod_{p | n_j}\Big ( \frac p{p+1} \Big )
\leq  \prod_{p \in P_j}\Big ( 1 + \frac {\lambda^2_f(p)\big(2k \big )^2}{2p}\Big ( \frac p{p+1} \Big )+\sum_{i \geq 2}\frac {\lambda_f(p)^{2i}(2k)^{2i}}{p^{i}(2i)!}\Big (
\frac p{p+1} \Big )\Big ) \\
& \leq \prod_{p \in P_j}\Big ( 1 + \frac {\lambda^{2}_f(p)\big(2k \big )^2}{2p}\Big ( \frac p{p+1} \Big )+\frac {e^{4k}}{p^{2}}\Big )
\leq \exp \Big (\frac {\big(2k \big )^2}{2}\sum_{p \in P_j}\frac {\lambda^{2}_f(p)}{p}+\sum_{p \in P_j}\frac {e^{4k}}{p^{2}}\Big ),
\end{split}
\end{align}
 where \eqref{1+xexpbound} is again utilized to obtain the last bound above. \newline

  Upon replacing $n$ by $n^2$ and noting that $\widetilde{\lambda}_f(n^2_j)=\widetilde{\lambda}^2_f(n_j), 1/w(n^2) \leq 1/w(n)$, we deduce that
\begin{align*}
 &  \Big( \frac{124k^2 }{\ell_j} \Big)^{2r_k\ell_j}(2r_k\ell_j)!\sum_{ \substack{ n_j=\square \\ \Omega(n_j) = 2r_k\ell_j \\ p|n_j \implies
p\in P_j}} \frac{\widetilde{\lambda}_f(n_j)}{\sqrt{n_j}}\frac{1 }{w(n_j)}\prod_{p | n_j}\Big ( \frac p{p+1} \Big )
\leq  \Big( \frac{124k^2 }{\ell_j} \Big)^{2r_k\ell_j}
\frac {(2r_k\ell_j)!}{ (r_k\ell_j)!}\Big (\sum_{p \in P_j}\frac{\lambda^2_f(p)}{p} \Big )^{r_k\ell_j}.
\end{align*}
Applying \eqref{boundsforsumoverp}  and \eqref{Stirling} to estimate the right side expression above to get that for some constant $B_1(k)$
   depending on $k$ only,
 \begin{align*}
 \Big( \frac{124k^2 }{\ell_j} \Big)^{2r_k\ell_j} & (2r_k\ell_j)!\sum_{ \substack{ n_j=\square \\ \Omega(n_j) = 2r_k\ell_j \\ p|n_j \implies
p\in P_j}} \frac{\widetilde{\lambda}_f(n_j)}{\sqrt{n_j}}\frac{1 }{w(n_j)}\prod_{p | n_j}\Big ( \frac p{p+1} \Big ) \ll B_1(k)^{\ell_j}e^{-r_k\ell_j\log (r_k\ell_j)}\Big (\sum_{p \in P_j}\frac {\lambda^{2}_f(p)}{p} \Big )^{r_k\ell_j}
\\
& \ll  B_1(k)^{\ell_j}e^{-r_k\ell_j\log (r_k\ell_j)}e^{r_k\ell_j \log (2\ell_j/N)} \ll e^{-\ell_j}\exp \Big (\frac {(2k)^2}{2}\sum_{p \in P_j}\frac {\lambda^{2}_f(p)}{p}  \Big ).
\end{align*}

  Combining the above with \eqref{maintermbound}--\eqref{sqinN'}, we get that the expression in \eqref{upperboundprodofN} is
\begin{align*}
%%\label{errorbound}
\begin{split}
\ll & X \prod^R_{j=1} \Big (1+e^{-\ell_j} \Big ) \exp \Big (\frac {\big(2k \big )^2}{2}\sum_{p \in P_j}\frac {\lambda^{2}_f(p)}{p} +\sum_{p \in P_j}\frac
{e^{4k}}{p^{2}}\Big ) \ll X ( \log X  )^{\frac {(2k)^2}{2}},
\end{split}
\end{align*}
where the last bound emerges by using Lemma \ref{RS}. The assertion of the proposition now follows from this.

\vspace*{.5cm}

\noindent{\bf Acknowledgments.}  P. G. is supported in part by NSFC grant 11871082 and L. Z. by the FRG grant PS43707 at the University of New South Wales.  The authors would like to thank the anonymous referee for his/her meticulous inspection of the paper and many helpful comments.

\bibliography{biblio}
\bibliographystyle{amsxport}

\vspace*{.5cm}

\end{document}